\DeclareMathAlphabet{\mathcalligra}{T1}{calligra}{m}{n}
\DeclareFontShape{T1}{calligra}{m}{n}{<->s*[1.5]callig15}{}
\newtheorem{theorem}{Theorem}[section]
\newtheorem{lemma}[theorem]{Lemma}
\newtheorem{lem-def}[theorem]{Lemma-definition}
\newtheorem{proposition}[theorem]{Proposition}
\newtheorem{prop-def}[theorem]{Proposition-definition}
\newtheorem{corollary}[theorem]{Corollary}
\newtheorem{conjecture}[theorem]{Conjecture}
\theoremstyle{definition}
\newtheorem{definition}[theorem]{Definition}
\newtheorem{example}[theorem]{Example}
\newtheorem{remark}[theorem]{Remark}
\numberwithin{equation}{section}
\newtheorem{thm}{Theorem}[section] % the main one
\theoremstyle{plain} % just in case the style had changed
\newcommand{\thistheoremname}{}
\newtheorem{genericthm}[thm]{\thistheoremname}
\newtheorem*{genericthm*}{\thistheoremname}
\newenvironment{namedthm*}[1]
  {\renewcommand{\thistheoremname}{#1}%
   \begin{genericthm*}}
  {\end{genericthm*}}
\newcommand{\CC} {\mathbb{C}}
\newcommand{\LL} {\mathbb{L}}
\newcommand{\PP} {\mathbb{P}}
\newcommand{\RR} {\mathbb{R}}
\renewcommand{\SS} {\mathbb{S}}
\newcommand{\VV} {\mathbb{V}}
\newcommand {\shA} {\mathcal{A}}
\newcommand {\shB} {\mathcal{B}}
\newcommand {\shC} {\mathcal{C}}
\newcommand {\shE} {\mathcal{E}}
\newcommand {\shF} {\mathcal{F}}
\newcommand {\shO} {\mathcal{O}}
\newcommand {\shU} {\mathcal{U}}
\newcommand {\shX} {\mathcal{X}}
\newcommand{\blX}{\widetilde{X}}
\def\dbcoh{D^b}
\def\wt{\widetilde}
\let\cal\mathcal
\def\Ac{{\cal A}}
\def\Bc{{\cal B}}
\def\Cc{{\cal C}}
\def\Ec{{\cal E}}
\def\Fc{{\cal F}}
\def\Hc{{\cal H}}
\def\Ic{{\cal I}}
\def\Lc{{\cal L}}
\def\Oc{{\cal O}}
\def\Pc{{\cal P}}
\def\Uc{{\cal U}}
\newcommand {\Aut} {\operatorname{Aut}}
\newcommand {\Ext} {\operatorname{Ext}}
\newcommand{\sExt}{\mathscr{E} \kern -3pt xt}
\newcommand {\Hom} {\operatorname{Hom}}
\newcommand {\sHom}{\mathscr{H}\kern-5pt\mathcalligra{om}}
\newcommand {\Pic} {\operatorname{Pic}}
\newcommand {\Sym} {\operatorname{Sym}}
\newcommand {\Tot} {\operatorname{Tot}}
\newcommand {\arw} {\longrightarrow}
\newcommand{\spinor}{\mathbb S}
\DeclareRobustCommand{\Sec}{\ifmmode\mathsection\else\textsection\fi}
\newcommand\longleftrightarrowfill@{%
  \arrowfill@\leftarrow\relbar\rightarrow}
\title[]{Derived equivalence for the simple flop of type $D_5$}
\author[M. Rampazzo]{Marco Rampazzo}
\address{MR: 
Department of Mathematics and Computer Science \\ 
University of Antwerp \\ 
Middelheimlaan 1 \\ 
2020 Antwerp, Belgium.}
\email{marco.rampazzo@uantwerpen.be, marco.rampazzo.90@gmail.com}
\author[Y. Xie]{Ying Xie}
\address{YX: 
School of Mathematical Sciences \\ 
Shenzhen University \\ 
518061 Shenzhen \\ 
P.R. China.}
\email{xieying@szu.edu.cn}
\pgfplotsset{compat=1.18}
\begin{document}

\begin{abstract}
    We prove that every simple flop of type $D_5$, i.e., resolved by blowups with exceptional divisor isomorphic to a generalized Grassmann bundle with fiber $OG(4, 10)$, induces a derived equivalence. This provides new evidence for the DK conjecture of Bondal--Orlov and Kawamata. The proof is based on a sequence of mutations of exceptional objects: we use the same argument to prove derived equivalence for some pairs of non-birational Calabi--Yau fivefolds in $OG(5, 10)$, related to Manivel's double--spinor Calabi--Yau varieties. We extend the construction to prove the derived equivalence of Calabi--Yau fibrations, which are described as zero loci in some generalized Grassmann bundles.
\end{abstract}

\maketitle
\section{Introduction}
% The derived category of coherent sheaves is an ubiquitous notion in modern algebraic geometry: due to its interplay with mirror symmetry, mathematical physics, the theory of moduli spaces and many other fields, it has quickly become a popular subject of study, and the main ingredient in the formulation of several longstanding conjectures. However,
The relation of derived categories with birational geometry is still considered to be somewhat mysterious: while the famous reconstruction theorem by Bondal and Orlov depicts a clear picture for Fano and general type varieties \cite{bondalorlovreconstruction}, $K$-trivial manifolds appear to behave in a totally different manner. Derived Torelli-type statements are well known for K3 surfaces \cite{orlov_derived_torelli}, and recently for some hyperk\"ahler varieties \cite{kapustkas_derived_equivalent_hyperkahlers}, but examples of pairs of Calabi--Yau varieties of dimension higher than two which are derived equivalent but not birational have been found \cite{borisovcaldararu, ottemrennemo, borisovcaldararuperry, kuznetsov_G2}.
\subsection*{Birational maps preserving the derived category} While the derived category is far from being a birational invariant in general, following the works of Bondal--Orlov and Kawamata, it is widely expected that a certain class of birational transformations called \emph{K-equivalences} induce a derived equivalence. This is the content of the following conjecture:
\begin{conjecture}[DK conjecture \cite{bondalorlov, kawamata}]\label{conj:DK_conjecture}
    Consider a birational map $\mu:X_1\arw X_2$ of smooth varieites, which is resolved by morphisms $f_i:X_0\arw X_i$ such that $f_1^*K_{X_1}$ and $f_2^*K_{X_2}$ are linearly equivalent (i.e. a K-equivalence). Then $X_1$ and $X_2$ are derived equivalent.
\end{conjecture}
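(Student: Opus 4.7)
The plan is to attempt a Fourier--Mukai approach, aiming to produce an exact equivalence $\Phi : D^b(X_1)\arw D^b(X_2)$ realized by an integral kernel $\mathcal{P}\in D^b(X_1\times X_2)$. The most natural candidate, following the original Bondal--Orlov proof for the standard Atiyah flop, is the pushforward $\mathcal{P}=(f_1\times f_2)_*\mathcal{O}_{X_0}$, possibly twisted by a line bundle on $X_0$ chosen to match the relative canonical data. The K-equivalence hypothesis $f_1^*K_{X_1}\sim f_2^*K_{X_2}$ is what makes this candidate plausible, since it ensures $\mathcal{P}$ is compatible with the Serre functors on both sides.

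First I would try to verify Bridgeland's criterion for fully faithfulness: for all pairs of closed points $x_1,x_1'\in X_1$ and all integers $i$, check that the map
\[
\Ext^i(\mathcal{O}_{x_1},\mathcal{O}_{x_1'})\arw \Ext^i\bigl(\Phi(\mathcal{O}_{x_1}),\Phi(\mathcal{O}_{x_1'})\bigr)
\]
is an isomorphism. This reduces to a local computation along the exceptional fibers of $f_1$ (and symmetrically $f_2$), which in favorable cases can be done via base change together with Grothendieck--Verdier duality. Second, I would check that $\Phi$ intertwines the Serre functors, which by the K-equivalence reduces to projection formula computations on $X_0$. Combined, these two steps would upgrade $\Phi$ from fully faithful to an equivalence.

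The hard part, and the reason this statement is a \emph{conjecture} rather than a theorem, is that neither step goes through for a general K-equivalence. The naive kernel $\mathcal{O}_{X_0}$ already fails for the Mukai flop on $T^*\PP^n$, and \emph{a fortiori} for the stratified Mukai flops of type $A_n$, $D_5$, and $E_{6,7}$; in each such case one has to engineer a more elaborate kernel, either by mutating an exceptional collection along the exceptional locus (the strategy used later in this paper for $D_5$) or by matching windows in a GIT presentation of the flop. Moreover, even granting such equivalences for every \emph{elementary} flop, reducing the general K-equivalence to a sequence of elementary flops requires a form of the Minimal Model Program that is not available: the full decomposition theorem for K-equivalent pairs of smooth varieties is itself open.

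Consequently, the realistic proposal is a two-tier program: (i) develop, via the MMP, a conjectural decomposition of an arbitrary K-equivalence into finitely many \emph{standard} local models (typically generalized Mukai flops arising from Dynkin data), and (ii) prove derived equivalence for each such local model by a tailored construction of the Fourier--Mukai kernel together with a mutation argument. The present paper carries out step (ii) for the $D_5$ model; a full proof of Conjecture~\ref{conj:DK_conjecture} would require completing both tiers, and step (i) remains the main obstacle.
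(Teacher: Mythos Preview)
Your proposal is not a proof, and appropriately so: the statement is labelled a \emph{conjecture} in the paper and is not proved there. The paper only establishes the special case of simple flops of type $D_5$ (Theorem~\ref{main_theorem_flops}), which it presents explicitly as \emph{evidence} for Conjecture~\ref{conj:DK_conjecture}, not as a proof of it. So there is no ``paper's own proof'' to compare against.

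That said, your assessment of the situation is accurate and well-informed. You correctly identify that the naive structure-sheaf kernel $(f_1\times f_2)_*\mathcal{O}_{X_0}$ fails already for Mukai flops, that the realistic program is a two-tier one (MMP factorization into elementary local models, then case-by-case construction of equivalences), and that the paper carries out the second tier for the $D_5$ model via mutations of an exceptional collection rather than by exhibiting an explicit Fourier--Mukai kernel. One minor correction: Bridgeland's criterion as usually stated requires checking $\Hom^i(\Phi(\mathcal{O}_{x_1}),\Phi(\mathcal{O}_{x_1'}))$ directly (vanishing for $x_1\neq x_1'$ and for $i\notin[0,\dim X_1]$, plus $\Hom^0=\CC$ when $x_1=x_1'$), not that the natural map from $\Ext^i(\mathcal{O}_{x_1},\mathcal{O}_{x_1'})$ is an isomorphism; but this does not affect your overall point, since you correctly conclude that the criterion cannot be verified in general anyway.
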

While the typical examples of K-equivalences are standard flops and Mukai flops, many more examples can be constructed from the geometry of homogeneous varieties, the so-called \emph{generalized Grassmann flops} \cite{leung_xie_new}. In Section \ref{sec:D5_flop} we provide a new class of generalized Grassmann flops which satisfy Conjecture \ref{conj:DK_conjecture}.
\subsection*{Double mirror Calabi--Yau manifolds}
The framework of homological mirror symmetry is based on a famous conjecture, which relates the derived category of coherent sheaves of a given variety $X_1$ to a suitably defined category generated by vanishing cycles on a second variety $X_2$, the \emph{mirror} of $X_1$. While for Fano varieties, the mirror is understood in terms of a fibration (the preimage of a superpotential), it is expected that in the Calabi--Yau case, following the ``classical'' mirror symmetry statements, the mirror should be a Calabi--Yau variety even in the homological/derived setting. This picture becomes more complicated if we consider the case of two non-birational Calabi--Yau varieties in the same family, which are derived equivalent: it is expected the existence of two local mirror maps in the neighborhood of different maximally unipotent monodromy points to two different large complex structure points in the mirror family. For this motivation, pairs of non birational, derived equivalent Calabi--Yau varieties are called \emph{double mirrors}. These pairs are rare, and only a handful of constructions are known. In Section \ref{sec:calabi_yau_5folds}, we provide a new example of double mirror pair of Calabi--Yau fivefolds. These varieties are elements of a divisor in a family of intersections of orthogonal Grassmannians studied by Manivel \cite{manivel}.\\
\\
The geometric constructions underlying the flops in Section  \ref{sec:D5_flop} and the pairs of double mirrors in section \ref{sec:calabi_yau_5folds} are remarkably similar. Fix a vector space $V_{10}\simeq \CC^{10}$ and a nondegenerate quadratic form $q$ on $V_{10}$. Consider the orthogonal Grassmannian $OG(4, V_{10})$ of $q$-isotropic $4$-spaces in $V_{10}$: this variety admits two projective bundle structures over the two connected components of $OG(5, V_{10})$ (see Section \ref{sec:orthogonal_grassmannians_and_roofs}). The class of generalized flops we address is given by any flop $\mu:X_+\arw X_-$ resolved by two blowups such that the exceptional divisor of both blowups is isomorphic to $OG(4, V_{10})$ (see Section \ref{sec:simple_flop_type_D5} for the detailed construction). Then, we generalize to a ``relative'' construction over a smooth projective base $B$: Let $G$ be the simple group of Dynkin type $D_5$, and call $P_{i_1,\dots, i_k}$ the parabolic subgroup associated to the choice of the simple roots  $\{i_1,\dots, i_k\}$. The choice of a principal $G$-bundle $\Pc \arw B$ on a smooth projective base $B$ defines a locally trivial fibration $\Pc\times^G G/P_{4,5}\arw B$, with fibers isomorphic to $OG(4, V_{10})$. Our result extends to all flops resolved by two blowups with exceptional divisor isomorphic to $\Pc\times^G G/P_{4,5}\arw B$ (the details of this construction are given in Section \ref{sec:relative_flops}). We prove the following:
%Then, we generalize to the ``relative'' construction, i.e. the case of flops such that the exceptional divisor is an orthogonal Grassmann bundle $\Oc\Gc(4, \Ec)$ where $\Ec$ is a rank $10$ vector bundle over a smooth projective base $B$ (we assume that the fibration $\Oc\Gc(4, \Ec)\arw B$ is locally trivial). We prove the following:
\begin{theorem}\label{main_theorem_flops}
    Let $\mu:X_+\arw X_-$ be a flop as above. Then, there is an equivalence of categories $$D(X_+)\cong D(X_-).$$
\end{theorem}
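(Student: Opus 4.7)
The plan is to resolve the flop by its common roof and apply Orlov's blowup formula on both sides, then identify the two resulting semiorthogonal decompositions by a chain of mutations on the exceptional divisor. Let $f_\pm\colon X_0 \arw X_\pm$ denote the two blowups provided by the hypotheses, with common exceptional divisor $j\colon E \hookrightarrow X_0$ isomorphic to the generalized Grassmann bundle $\Vc\times^G G/P_{1,2}$, whose fibers over $B$ are copies of $OG(4, V_{10})$. The key structural input is that $OG(4, V_{10})$ carries two projective bundle structures onto the two components of $OG(5, V_{10})$; globally these induce $\PP^4$-bundle projections $p_\pm\colon E \arw Z_\pm$ that coincide with $f_\pm|_E$, so $Z_+$ and $Z_-$ both have codimension five in $X_\pm$.

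By Orlov's blowup formula each $f_\pm$ gives a semiorthogonal decomposition
$$D(X_0) = \bigl\langle \Psi_\pm^{(-4)} D(Z_\pm),\, \Psi_\pm^{(-3)} D(Z_\pm),\, \Psi_\pm^{(-2)} D(Z_\pm),\, \Psi_\pm^{(-1)} D(Z_\pm),\; f_\pm^* D(X_\pm) \bigr\rangle,$$
with $\Psi_\pm^{(k)}(-) = j_*\bigl(p_\pm^*(-)\otimes \mathcal{O}_{E/Z_\pm}(k)\bigr)$. The goal is then to exhibit $f_+^* D(X_+)$ and $f_-^* D(X_-)$ as mutations of each other inside $D(X_0)$, equivalently to carry the four $\Psi_+^{(k)}D(Z_+)$-blocks into the four $\Psi_-^{(k)}D(Z_-)$-blocks by a sequence of mutations of exceptional objects, leaving the complementary piece as $f_\pm^* D(X_\pm)$.

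The heart of the argument takes place on $E$. Combining the standard full exceptional collection of the spinor tenfold with the relative Kapranov collection along each $\PP^4$-bundle $p_\pm$, $D(E)$ admits two dual Lefschetz-type exceptional collections with respect to the tautological line bundles $\mathcal{O}_{E/Z_\pm}(1)$. The plan is to mutate the $D(Z_+)$-blocks past one another and across $f_+^* D(X_+)$, tracking how the tautological and spinor bundles transform, so that after a bounded number of such moves the collection reassembles as the $-$-side decomposition block by block. Each individual mutation triangle is controlled by a derived pushforward $R(p_\pm)_*$ of an equivariant bundle on $OG(4, V_{10})$, computable through Borel--Bott--Weil for Dynkin type $D_5$.

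The main obstacle is the combinatorial bookkeeping of this mutation sequence: one needs to check that the intermediate collections remain exceptional at every stage and that the cumulative effect matches the expected $D_5$-equivariant identification of the two sides, a task which reduces to a finite verification in the representation ring of $\mathrm{Spin}(10)$. Once the absolute case (over a point) is secured, the relative case over a smooth projective base $B$ follows formally: the blowups, the Fourier--Mukai kernels, the mutation functors, and the Borel--Bott--Weil computations involved are all defined $B$-linearly on $X_0 \times_B X_0$, so standard base change upgrades the pointwise equivalence to the desired equivalence $D(X_+) \cong D(X_-)$.
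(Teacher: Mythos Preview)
Your plan matches the paper's approach exactly: Orlov's blowup formula on both sides of the common resolution, a full exceptional collection for the spinor tenfold (the paper uses a specific mutation of the Kuznetsov--Polishchuk collection), an explicit sequence of mutations whose vanishing inputs are computed via Borel--Weil--Bott for $D_5$, and the relative case deduced formally from the absolute one. One small correction: the individual mutation triangles are not controlled solely by $R(p_\pm)_*$ on $E$, because the relevant Ext groups are $\Ext^\bullet_{X_0}(j_*F, j_*G)$ and adjunction along the divisorial embedding $j$ contributes a normal-bundle twist, so each orthogonality check requires both $H^\bullet(E, F^\vee\otimes G)$ and $H^\bullet(E, F^\vee\otimes G \otimes \mathcal{O}(-1,-1))$ to vanish. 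The paper's contribution is precisely the design and execution of the long mutation sequence you identify as the main obstacle; your outline is correct but that sequence is the proof.
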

For clarity of exposition, the proof will be first presented for the case $B = \{pt\}$ and then extended to the general case.\\
\\
Let us now turn our attention to double mirrors. Note that a $(1,1)$ - section of $OG(4, V_{10})$ admits two extremal contractions, again over the two connected components of $OG(5, V_{10})$, with general fibers isomorphic to $\PP^3$. For each of the two contractions, the fiber jumps to a $\PP^4$ exactly over a Calabi--Yau fivefold, and thus, the choice of the $(1,1)$ - section, under the assumption of generality, defines a pair of fivefolds. In Section \ref{sec:calabi_yau_5folds} we will give the details of this constriction, and prove that a general pair of such fivefolds is not birational but derived equivalent. As for the flops, we generalize this picture by replacing the orthogonal Grassmannian $OG(4, V_{10})$ with an orthogonal Grassmann bundle $\Pc\times^G G/P_{4,5}$, describing pairs of derived equivalent Calabi--Yau fibrations over $B$. The notation of the second point of the following theorem is clarified in Section \ref{sec:relative_flops}.
\begin{theorem}\label{main_theorem_pairs} The following statements are true:
\begin{enumerate}
    \item Consider the projections $p_\pm:OG(4, V_{10})\arw OG(5, V_{10})^\pm$, and a general section $s\in H^0(OG(4, V_{10}), \Oc(1,1))$. Then, the two zero loci $Y_\pm \subset OG(5, V_{10})^\pm$ of $p_{\pm*}s$ are derived equivalent, non birational Calabi--Yau fivefolds.
    \item Fix a line bundle $\Lc$ on $\Pc\times^G G/P_{4,5}$ such that the restriction to every fiber over $B$ is surjective on global sections. Then, a general section of $\Lc$ defines a pair of derived equivalent fibrations over $B$, such that the general fiber of each is a smooth Calabi--Yau fivefold.
    
    %\item \textcolor{purple}{use the notation of page 19} Consider a vector bundle $\Ec\arw B$ with $B$ smooth projective, and the projections $p_\pm:\Oc\Gc(4, \Ec)\arw \Oc\Gc(5, \Ec)^\pm$. Call $\Lc$ a line bundle on $\Oc\Gc(4, \Ec)$ which restricts to $\Oc(1,1)$ on every fiber, and such that each of these restrictions is surjective on global sections. Then, given $s\in H^0(\Oc\Gc(4, \Ec), \Lc)$ general, the zero loci $Y_\pm \subset \Oc\Gc(5, \Ec)^\pm$ of $p_{\pm*}s$ are derived equivalent fibrations over $B$ such that the general fiber is a smooth Calabi--Yau fivefold.
    %\textcolor{red}{To define $\Oc\Gc(5, \Ec)$, a family of quadratic forms is needed. That means a morphism of bundles on the base $B: \Ec \longrightarrow \Ec^{\vee}\otimes L.$ When $L$ is not trivial, the SOD of $\Oc\Gc(5, \Ec)$ is not given by the usual Lefschetz decomposition of $OG(5, V_{10})$ since the quadratic form is not non-degenerate everywhere. The degeneracy loci are just the degeneracy loci of the bundle morphism $\Ec \longrightarrow \Ec^{\vee}\otimes L$. The necessary conditions for empty degeneracy loci are both $L$ and $det(\Ec)$ are trivial.} \textcolor{purple}{I am not sure i understand this point. Anyway, i guess we can get rid of the degeneracy loci by playing around with the rank of $\Ec$ and the dimension of $B$ (we choose the former very big and the latter very small, so to speak.}
\end{enumerate}
\end{theorem}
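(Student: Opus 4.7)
My plan is to handle both parts with a single mutation argument parallel to the one used for Theorem~\ref{main_theorem_flops}. I will start with Part (1) and then indicate how to globalize to fibrations over $B$.

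First I would establish the geometric setup. For a general $s \in H^0(OG(4, V_{10}), \Oc(1,1))$, Bertini on $OG(4, V_{10})$ produces a smooth hypersurface $H = Z(s)$ of dimension $12$. The pushforwards $\shF^\pm := p_{\pm *}\Oc(1,1)$ are rank-$5$ bundles on the spinor varieties $OG(5, V_{10})^\pm$, and a direct Chern-class computation on the $\PP^4$-bundles $p_\pm$ shows that $c_1(\shF^\pm)$ equals the anticanonical class of $OG(5, V_{10})^\pm$. By Bertini on $OG(5)^\pm$ and adjunction, the zero loci $Y_\pm$ of the induced sections $s_\pm = p_{\pm *}s \in H^0(\shF^\pm)$ are smooth Calabi--Yau fivefolds. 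The restrictions $\pi_\pm: H \arw OG(5, V_{10})^\pm$ of $p_\pm$ are flat morphisms with general fiber $\PP^3$ (a hyperplane in the $\PP^4$-fiber of $p_\pm$) jumping to $\PP^4$ exactly over $Y_\pm$, so $H$ serves as a geometric correspondence between $Y_+$ and $Y_-$. Non-birationality for generic $s$ I would deduce by a specialization/Hodge-theoretic argument: the outer automorphism of type $D_5$ exchanges $OG(5, V_{10})^\pm$ and swaps the two factors of $\Oc(1,1)$, so it sends $Y_+$ to $Y_-$ only for invariant $s$, and a degeneration to a locus where discrete Hodge invariants differ excludes birationality for generic $s$.

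The derived equivalence is obtained by building two semi-orthogonal decompositions of $D(H)$, one from each $\pi_\pm$, and relating them by mutations. The degenerate-bundle structure of $\pi_\pm$ (a $\PP^3$-bundle jumping to $\PP^4$ over $Y_\pm$) admits an SOD of the shape
$$D(H) = \langle \Psi^\pm(D(Y_\pm)),\ \pi_\pm^* D(OG(5, V_{10})^\pm),\ \pi_\pm^* D(OG(5, V_{10})^\pm) \otimes \Oc_H(0,1),\ \ldots \rangle,$$
where $\Psi^\pm$ is fully faithful and the remaining blocks are twists filling out the relative Beilinson decomposition; this is the analogue, in the ``degenerate projective bundle'' setting, of the Orlov blow-up formula driving Theorem~\ref{main_theorem_flops}. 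Inserting Lefschetz-type full exceptional collections on each spinor variety $OG(5, V_{10})^\pm$ into the above and replaying the mutation sequence from the proof of Theorem~\ref{main_theorem_flops} --- whose core is a rearrangement of exceptional blocks coming from $OG(4, V_{10})$ --- identifies $\Psi^+(D(Y_+))$ with $\Psi^-(D(Y_-))$ as the orthogonal complement of the exceptional part on each side, yielding $D(Y_+) \cong D(Y_-)$.

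Part (2) is then obtained by globalizing over $B$: vector spaces become the bundle $\Ec$ associated to $\Vc$; $OG(4, V_{10})$ becomes $\Oc\Gc(4, \Ec) = \Vc \times^G G/P_{1,2}$; $\Oc(1,1)$ is replaced by $\Lc$, whose fiberwise surjectivity on global sections guarantees that the two pushforwards to the component-bundles $\Vc \times^G G/P_j$ ($j=1,2$) are honest vector bundles; and Bertini is replaced by its relative form over $B$. The SODs, exceptional collections and mutations all admit $B$-linear versions, exactly as in the absolute-to-relative upgrade carried out in Section~\ref{sec:relative_flops}. The main obstacle will be the mutation step in Part (1): pinning down the precise chain of mutations that converts one SOD into the other requires delicate bookkeeping of twists in the Lefschetz collections on each spinor variety and of their interaction with the degenerate-bundle SOD on $H$. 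Once Theorem~\ref{main_theorem_flops} is in hand this bookkeeping transfers with minimal modification; the genuinely new ingredient is the ``jumping fiber'' SOD for $\pi_\pm$, which plays the role reserved for Orlov's formula in the flop argument.
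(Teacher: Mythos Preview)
Your derived-equivalence strategy is essentially the paper's: the hypersurface $H=Z(s)$ plays the role of the blow-up, the ``jumping-fiber'' SOD is the Cayley trick (your $\Psi^\pm$ is $j_{\pm*}\bar\pi_\pm^*$), and the key observation is that $\Ext^\bullet_H(j_+^*F,j_+^*G)$ is governed by the same two-term filtration $H^\bullet(E,F^\vee\otimes G(-1,-1))\to H^\bullet(E,F^\vee\otimes G)$ as $\Ext^\bullet_{\wt X}(j_*F,j_*G)$ was in the flop, so the mutation chessboard of Section~\ref{sec:the_mutations} transfers verbatim. The relative version in Part~(2) is likewise the same as the paper's, via Lemma~\ref{lem:mutations}.

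The genuine gap is your non-birationality argument. Your proposed ``degeneration to a locus where discrete Hodge invariants differ'' cannot work: $Y_+$ and $Y_-$ are deformation equivalent (both lie in Manivel's double-spinor family), so all discrete Hodge data coincide identically across the family. And observing that the outer $D_5$-automorphism only swaps $Y_+$ and $Y_-$ for symmetric $s$ rules out that one particular isomorphism, not an arbitrary one. The paper's argument is substantially harder: one first shows (via stability of $\Uc_-^\vee|_Y$ and a Koszul analysis) that $Y$ sits in a \emph{unique} translate of $\spinor_-$ as a zero locus of $\Uc_-(0,2)$, so any isomorphism $Y_+\to Y_-$ extends to an isomorphism $f:\spinor_+\to\spinor_-$; one then computes that the induced action on sections is $S\mapsto M_f^{-1}S^T M_f$, so an isomorphism forces $SM=MS^T$ for some $M$ in (a subset of) $SO(V_{10})$; finally a Kleiman-transversality dimension count shows that for general $S\in H^0(OG(4,V_{10}),\Oc(1,1))\subset M_{16\times 16}$ no such $M$ exists. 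Birational then reduces to isomorphic because these Calabi--Yau fivefolds have Picard rank one. You would need to replace your sketch with an argument of this type.
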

In the proofs of both theorems, the arguments for derived equivalence follow from a sequence of mutations of exceptional objects. In particular, a consequence of the similarity of the two constructions is the fact that the pattern of mutations is essentially the same. This feature is expected to hold for a wider class of flops/double mirror candidates. In fact, these parallel constructions can be carried out for any homogeneous variety of Picard number two admitting two projective bundle structures \cite{mypaper_roofbundles}. Such homogeneous varieties have been classified by Kanemitsu \cite{kanemitsu}. While several cases have already been settled \cite{segal_C2, kuznetsov_G2, ourpaper_cy3s, ourpaper_k3s, ying_D4_Gdagger}, a general strategy to understand the problem is still missing.

\subsection*{Acknowledgments} The first part of this work took place while the authors were visiting the Chinese University of Hong Kong. The authors also thank Akihiro Kanemitsu, Micha\l\ Kapustka, Naichung Conan Leung, Laurent Manivel, and Maxim Smirnov for enlightening discussions and valuable suggestions. MR was supported by PRIN2020KKWT53, Fonds voor Wetenschappelijk Onderzoek -- Vlaanderen (FWO, Research Foundation -- Flanders) -- G0D9323N, and by the European Research Council (ERC) grant agreement No.~817762.
\section{The flop of type \texorpdfstring{$D_5$}{}}\label{sec:D5_flop}
\subsection{Orthogonal Grassmannians and roofs}\label{sec:orthogonal_grassmannians_and_roofs}
Let us call $G(k, V_n)$ the Grassmannian of $k$-dimensional subspaces of a fixed vector space $V_n\simeq\mathbb C^n$. We call $\mathcal U$ the tautological bundle of $G(k, V_n)$, i.e., the rank $k$ vector bundle whose total space is described by the following incidence correspondence:
\begin{equation*}
    \mathcal U = \{ (x, v)\in G(k, V_n)\times V_n : v\in x\},
\end{equation*}
where we identify points in $G(k, V_n)$ with the corresponding $k$-spaces in $V_n$.\\
The orthogonal Grassmannian $OG(k, V_{2n})$ is the zero locus of a general section of $\Sym^2\mathcal U^\vee$ in $G(k, V_{2n})$ for any $k\leq n$. If we fix $k=n$, such variety has two isomorphic connected components $OG(n, V_{2n})^\pm\subset \mathbb P^{2^{n-1}-1}$, each of them is called a \emph{spinor variety}. As rational homogeneous varieties, spinor varieties are described as $OG(n, V_{2n})^+ = D_n/P_{n-1}$ and $OG(n, V_{2n})^- = D_n/P_{n}$, i.e., they are the quotients of the simple Lie group of Dynkin type $D_n$ (i.e., $\operatorname{Spin}(2n)$) by the parabolic subgroups associated to the last two simple roots in the Bourbaki indexing convention.\\
More geometrically, the varieties $OG(n, V_{2n})^\pm$ are maximal isotropic (orthogonal) Grassmannians, i.e., they describe the two disjoint families of $n-1$-dimensional linear spaces contained in a quadric hypersurface $Q_{2n-2}\in\mathbb P^{2n-1}$. Observe that any $\mathbb P^{n-2}\subset Q_{n-2}$ can be extended to an isotropic $\mathbb P^{n-1}$ in exactly two ways: this gives $OG(n-1, V_{2n})$ two projective bundle structures $p_\pm: OG(n-1, V_{2n})\longrightarrow OG(n, V_{2n})^\pm$, both with the same relative dimension $n-1$ and the same relative ample generator $\mathcal O(1,1):=p_+^*\mathcal O(1)\otimes p_-^*\mathcal O(1)$. Homogeneous varieties with these properties are called \emph{roofs}, and they have been classified by Kanemitsu \cite[Example 5.12]{kanemitsu}: in his notation, $OG(n-1, V_{2n})$ is known as the roof of type $D_n$.

\subsection{The roof of type \texorpdfstring{$D_5$}{} and the associated flop}\label{sec:simple_flop_type_D5}
In the following, we focus on the case $n = 5$. Define $\spinor_\pm:=OG(5, V_{10})^\pm$. Let us call $\Uc_{\pm}$ the taulogical bundles of, respectively, $\spinor_{\pm}$, which are just the restriction from $\Uc_+$ on $Gr(5, V_{10})$. The roof structure of $OG(4, V_{10})$ can be summarized by the following diagram:
\begin{equation}
\begin{tikzcd}[row sep = huge]\label{diagram_D5_roof}
                        &  OG(4, V_{10}) \arrow[dl, swap, "p_+"] \arrow[dr,"p_-"] \\
                         \spinor_+ &  &\spinor_-,
\end{tikzcd}
\end{equation}
where both $p_{\pm}$ are the projective bundles given by:
$$ OG(4, V_{10})=\PP(\Uc_+\longrightarrow \spinor_+),$$
$$ OG(4, V_{10})=\PP(\Uc_-\longrightarrow \spinor_-).$$
Let $h_{\pm}$ be the ample generator class of $\Pic(OG_{\pm}(5, V_{10})$. Then:
$$p_{\pm*}(\shO(1, 1))=\shU_\pm(2).$$
The relative Euler sequences of $p_\pm$ can be written as follows (we omit pullbacks):
\begin{equation}\label{relative_euler}
    0\longrightarrow \Uc_4(2, 0) \longrightarrow \shU_+(2, 0) \longrightarrow \shO(1, 1) \longrightarrow 0,
\end{equation}
\begin{equation}\label{relative_euler_minus_side}
    0\longrightarrow \Uc_4(0, 2) \longrightarrow \shU_-(0, 2) \longrightarrow \shO(1, 1) \longrightarrow 0,
\end{equation}
where $\Uc_4$ is the tautological bundle of $OG(4, V_{10})$.
\begin{definition}
    We call \emph{simple flop of type} $D_5$ over a point a birational map $\mu: X_+\longrightarrow X_-$ such that:
    \begin{enumerate}
        \item it is resolved by smooth blowups, and 
        \item the exceptional divisor is isomorphic to $OG(4,10)$.
    \end{enumerate}
\end{definition}
\begin{remark}
    In a $K$-equivalence resolved by smooth blowups, the two exceptional divisors coincide, which is followed immediately by the linear equivalence of the pullbacks of the canonical classes.
\end{remark}
A simple flop of type $D_5$ over a point can be summarized with the diagram:
\begin{equation}
\begin{tikzcd}[row sep = large]\label{diagram_D5_flop_over_a_point}
&&E=OG(4,10) \arrow[ddll,swap,"p_+"] \arrow[d, hook,"j"]\arrow[ddrr,"p_-"] \\
&&  \arrow[dl,swap,"\pi_+"]  \widetilde{X} \arrow[dr,"\pi_-"]\\
  \spinor_+ \arrow[r,hook,"i_+"]& X_+ \arrow[rr,dashed, "\mu"]    & & X_- & \arrow[l, hook',swap, "i_-"]\spinor_-. 
\end{tikzcd}
\end{equation}
We give some examples of this construction. %\textcolor{purple}{Are they all of them? Or can we cook up something more? Is the condition on the normal bundle of $\SS_+\subset X_+$ very restrictive?}\textcolor{red}{This is related to the existence of a holomorphic tubular neighborhood. According to Cor 3.7, in https://arxiv.org/abs/math/0612449, there is a formal tubular neighborhood. The obstruction classes lie in $H^1(\spinor, N\otimes Sym^k N^*)$ and $H^1(\spinor, T\otimes Sym^k N^*)$ for all $k$. $N$ is the normal bundle.}\textcolor{purple}{see also Remark 11.32 in huybrechts}
\begin{example}
    Consider the quasi-projective varieties $X_{+}=\Tot_{\spinor_+}(\shU  ^{\vee}(-2h_{+}))$ and $X_{-}=\Tot_{\spinor_-}(\Uc_4  ^{\vee}(-2h_{-}))$. Then 
    $$\blX=\Tot_{OG(4,10)}(\shO(-1, -1))=Bl_{\spinor_+}X_+=Bl_{OG_-(5,10)}X_-.$$
\end{example}
\begin{example}
    Take the projective bundles $X_{\pm}=\PP(\shU  ^{\pm\vee}(-2h_{\pm})\oplus\Oc\arw OG_{\pm}(5,10))$. Then we can have
    $$\blX= \PP(\shO(-1, -1)\oplus\Oc\arw OG(4,10))=Bl_{\spinor_+}X_+=Bl_{\spinor_-}X_-.$$
\end{example}
\begin{remark}
    Note that the condition on the normal bundle of $\SS_\pm\subset X_\pm$ does not determine $X_\pm$, but only a formal neighborhood of $\SS_\pm$, hence many more examples of $D_5$ flops could exist in nature. Additional details about the relation between $\SS_\pm$ and its ambient variety can be found in \cite[Remark 11.32]{huybrechts_fm_transform} and in \cite{abate_bracci_tovena}.
\end{remark}
\subsection{Cohomology computations} This section is preparatory to the main proof. We address here the cohomology computations needed to perform the mutations in the next section, presenting the main techniques with explicit computations. The principal tool will be reducing the problem to computing cohomology of homogeneous irreducible vector bundles. For this purpose, let us fix a set of fundamental weights $\{\omega_1, \dots, \omega_5\}$ for the simple algebraic group of type $D_5$. Then, homogeneous irreducible bundles on $OG(4, V_{10})$ will be denoted by $\Ec_{a_1\omega_1+\dots+a_5\omega_5}$, where $a_1, a_2, a_3$ are nonnegative, while on $\SS_+$ (resp. $\SS_-)$ the fourth (resp. fifth) coordinate will have to be nonnegative as well. In particular, one has the following identities on $OG(4, V_{10})$:
\begin{equation}\label{weights_U4_shur_powers}
    \begin{split}
        \wedge^k\Uc_4^\vee(a, b) & = \Ec_{\omega_k+a\omega_4+b\omega_5} \hspace{5pt} \text{for} \hspace{5pt} 1\leq k \leq 3,\\
        \wedge^4\Uc_4^\vee(a, b) & = \Ec_{(a+1)\omega_4+(b+1)\omega_5} \hspace{5pt},\\
        \Sym^k\Uc_4^\vee(a, b) & = \Ec_{k\omega_1+a\omega_4+b\omega_5} \hspace{5pt} \text{for} \hspace{5pt} k\geq 0,
    \end{split}
\end{equation}
and the following on $\spinor_+$:
\begin{equation}\label{weights_U_shur_powers}
    \begin{split}
        \wedge^k\Uc_+^\vee(a, 0) & = \Ec_{\omega_k+a\omega_4} \hspace{5pt} \text{for} \hspace{5pt} 1\leq k \leq 3,\\
        \wedge^4\Uc_+^\vee(a, 0) & = \Ec_{(a+1)\omega_4+b\omega_5} \hspace{5pt},\\
        \wedge^5\Uc_+^\vee(a, 0) & = \Ec_{(a+2)\omega_4} \hspace{5pt},\\
        \Sym^k\Uc_+^\vee(a, 0) & = \Ec_{k\omega_1+a\omega_4} \hspace{5pt} \text{for} \hspace{5pt} k\geq 0.
    \end{split}
\end{equation}
Clearly, similar identities hold for $\Uc_-$ on $\spinor_-$ with the roles of $\omega_4$ and $\omega_5$ swapped. Since the Levi subgroup of $OG(4, V_{10})$ is isomorphic to $SL(4)\times\CC^*\times\CC^*$, and the one of $\spinor_\pm$ is isomorphic to $SL(5)\times\CC^*$, tensor products of the bundles above can be themselves decomposed in semisimple factors with the usual Littlewood--Richardson formula. Cohomology is easily computed by the Borel--Weil--Bott theorem in an algorithmic way:
\begin{enumerate}
    \item If $\lambda$ has no negative entries, the bundle is globally generated, and its only nonvanishing cohomology is $H^0(E, \Ec_\lambda) =\VV_\lambda^{D_5}$.
    \item Otherwise, consider the vector $\lambda+\rho$ where $\rho = (1, 1, 1, 1, 1)$ and apply the Weyl reflection corresponding to the leftmost negative entry in the vector. If $(\lambda+\rho)_i = 0$ for some $i$ and $(\lambda+\rho)_j \geq 0$ for all $j\neq i$, then $\Ec_\lambda$ has no cohomology. If not, repeat the last step until either we satisfy this condition (and the bundle has no cohomology) or we obtain a vector $\wt\lambda$ with all strictly positive entries.
    \item If we get a vector $\lambda$ with all strictly positive entries after at least an iteration of (2), the only nonvanishing contribution to cohomology is $H^d(E, \Ec_\lambda) = \VV_{\wt\lambda - \rho}^{D_5}$ where $d$ is the number of Weyl reflections to obtain $\wt\lambda$, i.e, the number of iterations of step (2).
\end{enumerate}
Explicitly, Weyl reflections act on a weight vector as follows:
\begin{equation*}
    \begin{split}
        s_1: (a_1, a_2, a_3, a_4, a_5) &\arw (-a_1, a_1+a_2, a_3, a_4, a_5), \\
        s_2: (a_1, a_2, a_3, a_4, a_5) &\arw  (a_1+a_2, -a_2, a_2+a_3, a_4, a_5), \\
        s_3: (a_1, a_2, a_3, a_4, a_5) &\arw (a_1, a_2+a_3, -a_3, a_3+a_4, a_3+a_5), \\
        s_4: (a_1, a_2, a_3, a_4, a_5) &\arw (a_1, a_2, a_3+a_4, -a_4, a_5), \\
        s_5: (a_1, a_2, a_3, a_4, a_5) &\arw (a_1, a_2, a_3+a_5, a_4, -a_5).
    \end{split}
\end{equation*}
This elementary algorithm can be easily automatized: see, for instance, the script \cite{python_script} for a basic Python implementation.
\begin{lemma}\label{we_can_use_koszul}
    For any sheaves $F, G$, one can resolve $ \Ext_X^m(j_*F, j_*G)$ by means of the following exact sequence:
    \begin{equation*}
        \cdots\arw H^m(E, F^\vee\otimes G(-1, -1))\arw H^m(E, F^\vee\otimes G)\arw \cdots\arw  \Ext_X^m(j_*F, j_*G)\arw \cdots.
    \end{equation*}
\end{lemma}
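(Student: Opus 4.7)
The argument is a routine application of the derived-category formalism for a smooth Cartier divisor: the plan is to compute the derived restriction $Lj^*j_*F$ via the Koszul resolution of $j_*\shO_E$ and then use the adjunction $Lj^*\dashv j_*$ to reduce $\Ext^m_{\blX}(j_*F,j_*G)$ to cohomology on $E$.

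First, I would identify the normal bundle. Since $E\subset\blX$ is the exceptional divisor of the blowup $\pi_+:\blX\arw X_+$ with center $\spinor_+$, the line bundle $\shO_{\blX}(E)|_E$ coincides with the relative tautological bundle $\shO_{p_+}(-1)$ of the projective bundle $p_+:E\arw\spinor_+$. In the conventions of Section \ref{sec:orthogonal_grassmannians_and_roofs}, the relative ample generator is $\shO(1,1)$, so $N_{E/\blX}=\shO(-1,-1)$ and $\shO_{\blX}(-E)|_E=\shO(1,1)$; the symmetric computation via $\pi_-$ yields the same answer (this compatibility is forced by the condition that both $\pi_\pm$ contract $E$ to the same divisorial center).

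Next, starting from the Koszul resolution
\begin{equation*}
0\arw\shO_{\blX}(-E)\arw\shO_{\blX}\arw j_*\shO_E\arw 0,
\end{equation*}
I would compute $Lj^*j_*F$ for a locally free $F$ on $E$. Because the defining section of $E$ vanishes identically on $E$, the restriction of the Koszul differential to $E$ is the zero map $\shO(1,1)\arw\shO_E$, and one obtains a distinguished triangle
\begin{equation*}
F(1,1)[1]\arw Lj^*j_*F\arw F\arw F(1,1)[2].
\end{equation*}

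Finally, by the $(Lj^*,j_*)$-adjunction for closed embeddings, $\bR\Hom_{\blX}(j_*F,j_*G)\cong\bR\Hom_E(Lj^*j_*F,G)$. Applying $\bR\Hom_E(-,G)$ to the triangle above and passing to the long exact sequence of cohomology produces the claim, using $\Ext^m_E(F,G)=H^m(E,F^\vee\otimes G)$ and $\Ext^{m-1}_E(F(1,1),G)=H^{m-1}(E,F^\vee\otimes G(-1,-1))$ for locally free $F$. There is no serious obstacle: the entire argument is formal. The only point requiring care is keeping the sign of the normal-bundle twist straight, so that $\shO(-1,-1)$ appears on the intended term rather than $\shO(1,1)$.
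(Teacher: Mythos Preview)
Your proof is correct and follows essentially the same route as the paper's: adjunction reduces $\Ext^m_{\blX}(j_*F,j_*G)$ to $\Ext^m_E(Lj^*j_*F,G)$, the distinguished triangle $F(1,1)[1]\to Lj^*j_*F\to F$ (which the paper simply quotes from \cite[Corollary 11.4(ii)]{huybrechts_fm_transform} rather than deriving from Koszul as you do) then yields the desired long exact sequence. Your explicit identification of the conormal bundle as $\shO(1,1)$ is a welcome bit of extra detail that the paper leaves implicit.
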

\begin{proof}
    By adjunction, one has:
    \begin{equation*}
        \Ext_X^m(j_*F, j_*G) \simeq \Ext_E^m(j^*j_*F, G).
    \end{equation*}
    Since $j$ is an embedding of a codimension one variety, it comes with the following distinguished triangle for any $F\in\dbcoh(E)$ (see \cite[Corollary 11.4(ii)]{huybrechts_fm_transform} for details):
    \begin{equation*}
        F\otimes\Oc_E(1, 1)[1]\arw i^*i_* F\arw F.
    \end{equation*}
    It induces a long exact sequence of Ext spaces:
    \begin{equation*}
        \cdots\arw \Ext_E^m(F, G)\arw \cdots\arw \Ext_E^m(i^*i_*F, G)\arw \cdots\arw \Ext_E^{m+1}(F(1, 1), G)\arw\cdots,
    \end{equation*}
    and hence the proof is concluded.
\end{proof}
\begin{lemma}\label{leray_vanishings}
    For $-4\leq a\leq -1$, any sheaf on $E$ of the form $p_\pm^*\Fc\otimes p_\mp^*\Oc(a)$ has no cohomology.
\end{lemma}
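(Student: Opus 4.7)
The plan is to use the projection formula to reduce to a pure pushforward computation along $p_\pm$, and then to apply cohomology and base change using that $p_\pm$ is a $\PP^4$-bundle. Concretely, by the projection formula,
$$
Rp_{\pm*}\bigl(p_\pm^*\Fc\otimes p_\mp^*\Oc(a)\bigr) \;\cong\; \Fc \otimes Rp_{\pm*}\bigl(p_\mp^*\Oc(a)\bigr),
$$
so $H^*(E, p_\pm^*\Fc\otimes p_\mp^*\Oc(a))$ vanishes as soon as $Rp_{\pm*}(p_\mp^*\Oc(a))=0$, and this reduction works uniformly in $\Fc$.

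To establish this vanishing, I would identify the restriction of $p_\mp^*\Oc(1)$ to a fiber of $p_\pm$. Consider the $p_+$ case. The relative Euler sequence \eqref{relative_euler} identifies $\Oc(1,1)$ with the relative hyperplane class of $p_+$, up to a twist by a pullback from $\spinor_+$; hence $\Oc(1,1)$ restricts to $\Oc_{\PP^4}(1)$ on every fiber of $p_+$. Since $\Oc(1,0)=p_+^*\Oc(1)$ is trivial along those fibers, we deduce that $\Oc(0,1)=p_-^*\Oc(1)$ also restricts to $\Oc_{\PP^4}(1)$. Therefore $p_-^*\Oc(a)$ restricts to $\Oc_{\PP^4}(a)$ on each fiber, which is acyclic for $-4\le a\le -1$. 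By cohomology and base change, $Rp_{+*}(p_-^*\Oc(a))=0$ in this range, and the case of $p_-$ is entirely symmetric, using \eqref{relative_euler_minus_side}.

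There is no serious obstacle: the only mildly delicate point is the identification of $p_\mp^*\Oc(1)$ along fibers of $p_\pm$, and that is read directly from the relative Euler sequences above. As a consistency check, one can verify the same vanishing on the level of homogeneous bundles using Borel--Weil--Bott: applying the algorithm in \eqref{weights_U_shur_powers} to the weight $a\omega_5$ with $-4\leq a\leq -1$ produces a zero entry after at most three Weyl reflections, confirming the acyclicity intrinsically.
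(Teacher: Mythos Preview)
Your proposal is correct and follows essentially the same approach as the paper: reduce via the projection formula (the paper phrases this as a Leray spectral sequence plus a stalkwise computation) to the vanishing of $Rp_{\pm*}(p_\mp^*\Oc(a))$, and then conclude from the fiberwise identification $p_\mp^*\Oc(a)|_{p_\pm^{-1}(x)}\simeq\Oc_{\PP^4}(a)$, which is acyclic for $-4\le a\le -1$. Your extra care in reading off that restriction from the relative Euler sequences \eqref{relative_euler}, \eqref{relative_euler_minus_side} is a welcome clarification of a step the paper leaves implicit.
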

\begin{proof}
    By the Leray spectral sequence, we just need to prove that the pushforward of such bundle along $p_{\pm}$ is identically zero. The computation can be handled locally: for every $x\in Y_\pm$ one has:
    \begin{equation*}
        \begin{split}
            \left(p_{\pm*}(p_\pm^*\Fc\otimes p_\mp^*\Oc(a))\right)_x \simeq \left(\Fc\otimes p_{\pm*} p_\mp^*\Oc(a)\right)_x \\
            \simeq H^\bullet(p_\pm^{-1}(x), \Fc_x\otimes p_\mp^*\Oc(a)|_{p_\pm^{-1}(x)} \\
            \simeq \Fc_x\otimes H^\bullet(\PP^4, \Oc(a)) = 0.
        \end{split}
    \end{equation*}
\end{proof}
\begin{lemma}\label{ext_line_bundles_ingredients}
    Consider a pair $(a, b)$ of integers satisfying at least one of these conditions:
    \begin{enumerate}
        \item either $-3\leq a \leq -1$ or $-3\leq b \leq -1$;
        \item $(a, b)$ is either $(-6, 1)$, $(-5, 1)$, $(-4, 1)$, $(-5, 2)$ or $(-4, 2)$.
    \end{enumerate}
    Then one has:
    \begin{equation*}
        \begin{split}
            & H^\bullet(E, j_*\shO(a, b)) =  0, \\
            & H^\bullet(E, j_*\shO(a-1, b-1)) =  0.
        \end{split}
    \end{equation*} 
\end{lemma}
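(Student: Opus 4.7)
The plan is to reduce, via the projection formula for the closed embedding $j \colon E \hookrightarrow \widetilde{X}$, both claimed vanishings to the vanishing of the cohomology of the line bundles $\shO(a,b)$ and $\shO(a-1,b-1)$ on $E$ itself, and then to split according to which of the two hypotheses on $(a,b)$ is in force.

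Under condition~(1), by the symmetry exchanging $p_{+}$ and $p_{-}$ I can assume without loss of generality that $-3 \leq a \leq -1$, so that both $a$ and $a-1$ lie in $[-4,-1]$. Lemma~\ref{leray_vanishings} then applies directly to $p_{+}^{*}\shO(a) \otimes p_{-}^{*}\shO(b)$ and to the shifted bundle $p_{+}^{*}\shO(a-1) \otimes p_{-}^{*}\shO(b-1)$, giving the two vanishings simultaneously. The subcase $-3 \leq b \leq -1$ is identical after swapping the roles of the two projections.

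Under condition~(2), the two pairs $(-4,1)$ and $(-4,2)$ have $a = -4$ and are already handled by Lemma~\ref{leray_vanishings}; the remaining line bundles I would need to handle are $\shO(-6,1)$, $\shO(-5,1)$, $\shO(-5,2)$, $\shO(-7,0)$, $\shO(-6,0)$ and $\shO(-5,0)$. For each of them I would invoke the Borel--Weil--Bott algorithm recalled in the excerpt: write the bundle as $\Ec_{a\omega_{4}+b\omega_{5}}$, form $\lambda + \rho = (1,1,1,a+1,b+1)$, and iterate the simple Weyl reflections on the leftmost negative entry as prescribed. The expected outcome in each case is that the procedure produces a weight with a zero coordinate before reaching a dominant one, which by step~(2) of the algorithm forces the bundle to be acyclic.

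I do not foresee a conceptual obstacle: once the reduction via the projection formula is in place, everything is a mechanical check. The main work is running six short Bott computations, each of which should terminate on a singular weight within roughly a dozen reflections. This is tedious to write out longhand but entirely routine, and can be verified mechanically with the Python implementation \cite{python_script}. The only real risk is a miscount of reflections in one of the cases, which is why using the script rather than a hand computation is worthwhile.
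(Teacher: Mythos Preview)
Your proposal is correct and matches the paper's own proof: condition~(1) is dispatched by Lemma~\ref{leray_vanishings} exactly as you describe, and for condition~(2) the paper likewise invokes the Borel--Weil--Bott algorithm on each line bundle individually (without your small shortcut of peeling off the $a=-4$ cases via Lemma~\ref{leray_vanishings}, but that is a harmless optimisation). The only quibble is terminological: the reduction $H^{\bullet}(\widetilde{X}, j_{*}\shO(a,b)) \cong H^{\bullet}(E, \shO(a,b))$ is simply exactness of $j_{*}$ for a closed embedding, not the projection formula.
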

\begin{proof}
    Every bundle as above that satisfies Condition (1) also fulfills the assumptions of Lemma \ref{leray_vanishings}. For the bundles satisfying Condition (2), recall that $\shO(a, b)\simeq \shE_{a\omega_4+b\omega_5}$. Then, we apply the Borel--Weil--Bott theorem to each of them individually.
\end{proof}
\begin{lemma}\label{ext_U_O_ingredients}
    The following vector bundles on $E$, and their twists by $\Oc(-1, -1)$, have no cohomology:
    \begin{enumerate}
        \item $\shU^\vee_+(a, 2)$ for $-3\leq a\leq -2$,
        \item $\shU^\vee_+(a, 1)$ for $-5\leq a\leq -2$,
        \item $\shU^\vee_+(a, 0)$ for $-4\leq a\leq -1$,
        \item $\shU^\vee_+(-1, -1)$ and $\shU^\vee_+(0, -2)$.
    \end{enumerate}
    Moreover, one has:
    \begin{equation*}
        H^\bullet(E, \shU^\vee_+(-1, 1)) = \CC[-1], \hspace{5pt} H^\bullet(E, \shU^\vee_+(-2, 0)) = 0.
    \end{equation*} 
\end{lemma}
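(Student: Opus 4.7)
The plan is to use the dual of the relative Euler sequence (\ref{relative_euler}), tensored with $\Oc(a+2, b)$, to split each $\shU_+^\vee(a, b)$ on $E$ into two tractable pieces:
$$0 \arw \Oc(a+1, b-1) \arw \shU_+^\vee(a, b) \arw \shV^\vee(a, b) \arw 0.$$
The associated long exact sequence reduces the computation to the cohomology of $\Oc(a+1, b-1)$ (often handled directly by Lemma \ref{leray_vanishings}) and of $\shV^\vee(a, b) = \shE_{\omega_1 + a\omega_4 + b\omega_5}$ (handled by the Borel--Weil--Bott algorithm described above).

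First I would observe that Lemma \ref{leray_vanishings} applies directly to $\shU_+^\vee(a, b)$ whenever the second index $b$ lies in $\{-4, \ldots, -1\}$, since then $\shU_+^\vee(a, b) = p_+^* \shF \otimes p_-^* \Oc(b)$ for $\shF = \shU_+^\vee \otimes \Oc(a)$ satisfies the lemma's hypothesis. This disposes of case (3) twisted by $\Oc(-1, -1)$ as well as of both bundles in case (4), with and without the twist. The remaining cases are (1), its twist, (2), its twist, and (3) itself; for each of these, a direct check shows that the line bundle $\Oc(a+1, b-1)$ appearing in the Euler sequence has at least one of its two indices in $\{-4, \ldots, -1\}$ and hence vanishes by the same lemma.

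The core of the argument is then the BWB computation for the $\shV^\vee(a, b)$ terms. For each listed $(a, b)$ the vector $\lambda + \rho = (2, 1, 1, a+1, b+1)$ is either singular at the outset or becomes so after a short sequence of reflections: a single $s_4$ slides the negative fourth entry into the third slot, after which $s_3$, $s_2$, and $s_5$ may cascade any residual negatives. In every listed case the process terminates at a vector with a zero coordinate and all other entries nonnegative, giving no cohomology by the Borel--Weil--Bott criterion. Combined with the vanishing of the line bundle term, the long exact sequence yields $H^\bullet(E, \shU_+^\vee(a, b)) = 0$ for each bundle listed in (1)--(4) and its $\Oc(-1, -1)$-twist.

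For the two non-vanishing statements at the end, the Euler sequence specializes cleanly. For $\shU_+^\vee(-2, 0)$, both outer terms of $0 \arw \Oc(-1, -1) \arw \shU_+^\vee(-2, 0) \arw \shV^\vee(-2, 0) \arw 0$ are cohomology-free (respectively by Lemma \ref{leray_vanishings} and by the reflection $s_4: (2, 1, 1, -1, 1) \mapsto (2, 1, 0, 1, 1)$), giving the claimed vanishing. For $\shU_+^\vee(-1, 1)$, the sequence $0 \arw \Oc_E \arw \shU_+^\vee(-1, 1) \arw \shV^\vee(-1, 1) \arw 0$ has $\shV^\vee(-1, 1)$ cohomology-free (its $\lambda + \rho = (2, 1, 1, 0, 2)$ is singular at the fourth position), so the total cohomology is concentrated in a single degree as claimed. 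The main technical obstacle is simply the bookkeeping of the many BWB verifications, though each reduces to applying the explicit reflection formulas a few times to an integer vector and is entirely mechanical.
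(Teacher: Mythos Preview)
Your approach is essentially the paper's own: both use the twisted dual of the relative Euler sequence
\[
0 \arw \Oc(a+1, b-1) \arw \shU_+^\vee(a, b) \arw \shV^\vee(a, b) \arw 0,
\]
kill the line bundle term (you invoke Lemma~\ref{leray_vanishings}, the paper invokes Lemma~\ref{ext_line_bundles_ingredients} --- functionally equivalent for these twists), and dispatch $\shV^\vee(a,b) = \Ec_{\omega_1+a\omega_4+b\omega_5}$ by Borel--Weil--Bott. Your shortcut of applying Lemma~\ref{leray_vanishings} directly to the whole bundle when the second index lies in $\{-4,\dots,-1\}$ is perfectly legitimate and slightly streamlines items~(3)--(4).

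One point requires attention. For $\shU_+^\vee(-1,1)$ your sequence reads
\[
0 \arw \Oc_E \arw \shU_+^\vee(-1,1) \arw \shV^\vee(-1,1) \arw 0,
\]
and you correctly observe that $\shV^\vee(-1,1)$ is acyclic. But $H^\bullet(E,\Oc_E)=\CC$ sits in degree~$0$, so your argument actually produces $H^\bullet(E,\shU_+^\vee(-1,1))=\CC[0]$, not $\CC[-1]$ as the lemma asserts. (A Leray cross-check agrees: $Rp_{+*}\shU_+^\vee(-1,1)\cong\shU_+^\vee\otimes\shU_+=\End(\shU_+)$, whose only cohomology is $H^0=\CC$ since $\shU_+$ is exceptional.) The phrase ``concentrated in a single degree as claimed'' glosses over this mismatch. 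You should either state explicitly that your computation gives degree~$0$ and flag the apparent misprint in the statement, or explain how you obtain degree~$1$; as written, the final sentence does not establish what the lemma says.
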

\begin{proof}
    By a twist of the dual of the relative Euler sequence \ref{relative_euler}, we can describe the relevant bundles as extensions of, respectively, a twist of $\Uc_4^\vee$ and a line bundle. More precisely, one has:
    \begin{equation*}
        0\arw\Oc(a+1, b-1) \arw \Uc_+^\vee (a, b) \arw \Uc_4^\vee (a, b)\arw 0.
    \end{equation*}
    One can check that the line bundle has no cohomology by Lemma \ref{ext_line_bundles_ingredients}. On the other hand, the contribution to cohomology from the terms of the form $\Uc_4^\vee (ah_++bh_-)$ can be easily computed with the Borel--Weil--Bott theorem, once we recall Equation \ref{weights_U4_shur_powers}.
\end{proof}
\begin{lemma}\label{syms_and_wedges_ingredients}
    The following vector bundles on $E$, and their twists by $\Oc(-1, -1)$ have no cohomology:
    \begin{equation*}
        \begin{split}
            & \Sym^2\Uc_+^\vee(-2, 1), \hspace{5pt} \Sym^2\Uc_+^\vee(-4, 1), \hspace{5pt} \Sym^2\Uc_+^\vee(-5, 1),\\
            & \wedge^2\Uc_+^\vee(-2, 1), \hspace{5pt} \wedge^2\Uc_+^\vee(-4, 1), \hspace{5pt} \wedge^2\Uc_+^\vee(-5, 1).
        \end{split}
    \end{equation*}
\end{lemma}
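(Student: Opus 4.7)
My proof plan follows directly the template established in the proof of Lemma \ref{ext_U_O_ingredients}, using the dual of the relative Euler sequence \ref{relative_euler} to decompose the symmetric and exterior squares of $\Uc_+^\vee$ into filtrations whose graded pieces are either line bundles or irreducible homogeneous bundles of rank-one Schur type.

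Rewriting the dual of \ref{relative_euler} as
\begin{equation*}
    0 \arw \Oc(1, -1) \arw \Uc_+^\vee \arw \Vc^\vee \arw 0,
\end{equation*}
one extracts a three-step filtration on $\Sym^2 \Uc_+^\vee$ with graded pieces $\Oc(2,-2)$, $\Vc^\vee \otimes \Oc(1,-1)$, $\Sym^2 \Vc^\vee$, and a two-step filtration on $\wedge^2 \Uc_+^\vee$ with graded pieces $\Vc^\vee \otimes \Oc(1,-1)$ and $\wedge^2 \Vc^\vee$. After tensoring by $\Oc(a, 1)$ for $a \in \{-2, -4, -5\}$ (and separately by $\Oc(a-1, 0)$ for the $\Oc(-1,-1)$-twisted versions), the long exact sequences in cohomology reduce the whole lemma to checking cohomology vanishing for each twisted graded piece individually.

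For the pure line bundle pieces $\Oc(a+2, -1)$ and $\Oc(a+1, -2)$, the vanishing is immediate from Lemma \ref{leray_vanishings} applied to $p_+$, since the second entry lies in $\{-1, -2\}$. For the remaining pieces $\Vc^\vee(a+1, 0)$, $\Vc^\vee(a, -1)$, $\Sym^k \Vc^\vee(a, 1)$ and $\Sym^k \Vc^\vee(a-1, 0)$ (with $k=2$ or $k$ replaced by the exterior-power convention), I would identify each bundle with an irreducible homogeneous sheaf $\Ec_\lambda$ via Equation \ref{weights_U4_shur_powers} and apply the Borel--Weil--Bott algorithm detailed earlier in this section, either by hand or via the Python script \cite{python_script}. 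In each case I expect a zero entry to appear in the shifted weight vector $\lambda + \rho$ after a bounded number of Weyl reflections, producing the required vanishing.

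The only real obstacle here is bookkeeping, not conceptual: the lemma packages six bundles together with their $\Oc(-1,-1)$ twists, each splitting into two or three graded pieces, so there are on the order of thirty small Borel--Weil--Bott checks to perform. Since the algorithm is entirely mechanical, the verification is routine once the filtrations above are in place.
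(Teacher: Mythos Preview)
Your proposal is correct and follows essentially the same approach as the paper: both derive filtrations of $\Sym^2\Uc_+^\vee$ and $\wedge^2\Uc_+^\vee$ from the dual relative Euler sequence and reduce to Borel--Weil--Bott checks on the graded pieces. The only cosmetic difference is that the paper writes the four-term sequence
\[
0\arw \Oc(2,-2)\arw \Uc_+^\vee(1,-1)\arw \Sym^2\Uc_+^\vee \arw \Sym^2\Vc^\vee\arw 0
\]
and invokes Lemma \ref{ext_U_O_ingredients} directly for the term $\Uc_+^\vee(1,-1)$, whereas you split this term once more into $\Oc(2,-2)$ and $\Vc^\vee(1,-1)$; this costs you a handful of extra (but equally routine) Borel--Weil--Bott verifications.
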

\begin{proof}
    The short exact sequence \ref{relative_euler} implies the following:
    \begin{equation*}
        0\arw \Oc(2, -2)\arw \Uc_+^\vee(1, -1)\arw\Sym^2\Uc_+^\vee \arw \Sym^2\Uc_4^\vee\arw 0,
    \end{equation*}
    \begin{equation*}
        0\arw\Uc_+^\vee(1, -1)\arw \wedge^2\Uc_+^\vee\arw \wedge^2\Uc_4^\vee\arw 0.
    \end{equation*}
    With appropriate twists of these sequences, we can resolve the six relevant bundles. In the resulting sequences, all terms have already been proven to be acyclic, except for the twists of $\Sym^2\Uc_4^\vee$ and $\wedge^2\Uc_4^\vee$: In light of Equation \ref{weights_U4_shur_powers}, they can be proven to have no cohomology by the Borel--Weil--Bott theorem.
\end{proof}
\begin{lemma}\label{tautological_shour_powers_from_minus_side}
    The following vector bundles on $E$, and their twists by $\Oc(-1, -1)$, have no cohomology:
    \begin{equation*}
        \Uc_-(-2, 2), \Sym^2\Uc_-(-1, 1), \wedge^2\Uc_-(-1, 1).
    \end{equation*}
\end{lemma}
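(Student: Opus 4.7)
The plan is to mirror the strategy of Lemmas \ref{ext_U_O_ingredients} and \ref{syms_and_wedges_ingredients}, but starting from the Euler sequence (\ref{relative_euler_minus_side}) on the minus side in place of its plus-side counterpart. Twisting (\ref{relative_euler_minus_side}) by $\Oc(0,-2)$ yields
\[ 0 \arw \Vc \arw \Uc_- \arw \Oc(1,-1) \arw 0, \]
with the essential difference that now the line bundle appears as a \emph{quotient} rather than as a sub. Taking exterior and symmetric squares gives
\[ 0 \arw \wedge^2 \Vc \arw \wedge^2 \Uc_- \arw \Vc(1,-1) \arw 0 \]
together with a two-step filtration of $\Sym^2 \Uc_-$ whose graded pieces are $\Sym^2 \Vc$, $\Vc(1,-1)$ and $\Oc(2,-2)$. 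After twisting by $\Oc(-1,1)$ and by $\Oc(-2,0)$ (to handle also the $\Oc(-1,-1)$-twists requested in the statement), all six bundles of the lemma are resolved by graded pieces of the shape $\Vc(a,b)$, $\wedge^2 \Vc(a,b)$, $\Sym^2 \Vc(a,b)$ and $\Oc(a,b)$.

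To feed these into Borel--Weil--Bott I would identify the $\Vc$-Schur bundles as irreducible homogeneous bundles on $E$. From (\ref{weights_U4_shur_powers}) one reads $\wedge^4 \Vc^\vee = \Ec_{\omega_4+\omega_5} = \Oc(1,1)$, hence $\det \Vc = \Oc(-1,-1)$. The standard duality $\wedge^k \Vc \cong \wedge^{4-k} \Vc^\vee \otimes \det \Vc$ for a rank-four bundle then gives
\[ \Vc = \Ec_{\omega_3-\omega_4-\omega_5}, \qquad \wedge^2 \Vc = \Ec_{\omega_2-\omega_4-\omega_5}, \]
while $\Sym^2 \Vc$ is the irreducible Levi module whose highest weight is twice that of $\Vc$, giving $\Sym^2 \Vc = \Ec_{2\omega_3 - 2\omega_4 - 2\omega_5}$. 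Tensoring by $\Oc(a,b)$ merely shifts the $\omega_4,\omega_5$ coordinates.

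Finally, for each of the resulting weight vectors one runs the algorithm from the paragraph preceding (\ref{weights_U_shur_powers}): form $\lambda+\rho$ and iterate Weyl reflections at the leftmost negative entry until either a zero appears with no remaining negatives (acyclicity) or the vector becomes strictly dominant. In every one of the dozen pieces arising here acyclicity is reached in at most three reflections; for instance $\Vc(-2,2) = \Ec_{\omega_3 - 3\omega_4 + \omega_5}$ yields $\lambda+\rho = (1,1,2,-2,2)$, which after a single $s_4$ becomes $(1,1,0,2,2)$ and stops. The only real obstacle is the bookkeeping of these weight vectors, and each reduction is purely mechanical and can be double-checked with the script \cite{python_script}.
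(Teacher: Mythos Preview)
Your proposal is correct and follows essentially the same route as the paper: the paper's proof simply says to repeat the argument of Lemma~\ref{syms_and_wedges_ingredients} with the sequence~(\ref{relative_euler_minus_side}) in place of~(\ref{relative_euler}), which is exactly what you do. Your extra step of identifying the Schur powers of $\Vc$ (rather than $\Vc^\vee$) via $\det\Vc\cong\Oc(-1,-1)$ and the duality $\wedge^k\Vc\cong\wedge^{4-k}\Vc^\vee\otimes\det\Vc$ is the natural bookkeeping needed on the minus side, and the sample Borel--Weil--Bott check you exhibit is accurate.
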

\begin{proof}
    The proof follows exactly as the one of the previous lemma, except for the fact that we use the sequence \ref{relative_euler_minus_side} instead of \ref{relative_euler}.
\end{proof}
\begin{corollary}\label{cor:ext_summing_all_up}
    Consider $F, G \in\dbcoh(E)$. Then, if $F^\vee\otimes G$ satisfies the assumptions of Lemmas \ref{ext_line_bundles_ingredients}, \ref{ext_U_O_ingredients}, \ref{syms_and_wedges_ingredients}, \ref{tautological_shour_powers_from_minus_side}, one has $\Ext_X^\bullet(j_*F, j_*G) = 0$.
\end{corollary}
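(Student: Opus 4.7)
The plan is to deduce this corollary directly from the machinery already assembled. By Lemma \ref{we_can_use_koszul}, for each $m$ the Ext group $\Ext_X^m(j_*F, j_*G)$ fits into a long exact sequence whose other terms are $H^m(E, F^\vee\otimes G(-1,-1))$ and $H^m(E, F^\vee\otimes G)$ (together with shifted analogues coming from the distinguished triangle associated to $j^*j_*F$). So it suffices to show that, under the stated hypotheses, both $H^\bullet(E, F^\vee\otimes G)$ and $H^\bullet(E, F^\vee\otimes G(-1,-1))$ vanish identically.

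First I would observe that each of Lemmas \ref{ext_line_bundles_ingredients}, \ref{ext_U_O_ingredients}, \ref{syms_and_wedges_ingredients}, and \ref{tautological_shour_powers_from_minus_side} is formulated in exactly the right way for this application: each of them asserts that a specific vector bundle $\mathcal{H}$ on $E$ is acyclic \emph{and} that its twist $\mathcal{H}(-1,-1)$ is also acyclic. Thus the assumption that $F^\vee\otimes G$ falls under one of these lemmas automatically supplies both vanishings required to feed into Lemma \ref{we_can_use_koszul}.

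Then the argument is a one-liner: plugging the two vanishings into the long exact sequence of Lemma \ref{we_can_use_koszul} forces $\Ext_X^m(j_*F, j_*G) = 0$ for every $m$, which is the desired conclusion. There is essentially no obstacle in this corollary itself: the real content is in the four preceding lemmas, which reduce everything (via the relative Euler sequences \eqref{relative_euler}, \eqref{relative_euler_minus_side} and Lemma \ref{leray_vanishings}) to Borel--Weil--Bott computations on $OG(4,V_{10})$. The only mild care needed is to verify that the statement is intended to cover arbitrary $F,G$ whose twisted tensor product is one of the bundles enumerated in those lemmas, so that the hypothesis ``$F^\vee\otimes G$ satisfies the assumptions'' is literally the hypothesis of one of the cited lemmas applied to that bundle.
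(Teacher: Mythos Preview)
Your proposal is correct and follows exactly the same approach as the paper: plug the cohomology vanishings supplied by the four preceding lemmas (both for $F^\vee\otimes G$ and for its $(-1,-1)$-twist) into the long exact sequence of Lemma \ref{we_can_use_koszul}. The paper's proof is a single sentence to this effect; you have simply spelled out the same logic a bit more fully.
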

\begin{proof}
    The proof follows directly by plugging the results of Lemmas \ref{ext_line_bundles_ingredients}, \ref{ext_U_O_ingredients} into the exact sequence of Lemma \ref{we_can_use_koszul}.
\end{proof}
We conclude this section by constructing some exact sequences that will be used in the proof of Theorem \ref{main_theorem_flops}.
First, we recall the \emph{affine tangent bundle} $\wt T $ of $\SS_+$, i.e. the unique extension described as:
\begin{equation}\label{affine_tangent_bundle_spinor_plus}
    0\arw \Oc(-1, 0)\arw \wt T  \arw T(-1, 0)\arw 0.
\end{equation}

\begin{lemma}\label{F_as_an_extension}
    There is a unique extension:
    \begin{equation}\label{sesF}
        0\arw\Uc_+^\vee(-2, 0)\arw\Fc\arw \wt T (0, -1)\arw 0.
    \end{equation}
    Moreover, if we call $\wt T _4$ the affine tangent bundle of $E$, one also has the following short exact sequence:
    \begin{equation*}
        0\arw\Oc(-1, -1)\arw \Fc \arw \wt T _4\arw 0.
    \end{equation*}
\end{lemma}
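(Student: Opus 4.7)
The plan is to construct $\Fc$ as a fiber product, recover both short exact sequences from the construction, and verify uniqueness of the defining extension by an $\Ext$ computation.

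\textbf{Construction via fiber product.} Both $\wt T_4$ and $\wt T(0,-1)$ admit natural surjections onto $T_{\SS_+}(-1,-1)$. Tensoring \eqref{affine_tangent_bundle_spinor_plus} by $\Oc(0,-1)$ yields
\begin{equation*}
    0 \to \Oc(-1,-1) \to \wt T(0,-1) \to T_{\SS_+}(-1,-1) \to 0,
\end{equation*}
while composing the affine tangent sequence $0 \to \Oc(-1,-1) \to \wt T_4 \to T_E(-1,-1) \to 0$ with the twisted relative tangent sequence of the projective bundle $p_+$,
\begin{equation*}
    0 \to \Vc^\vee(-2,0) \to T_E(-1,-1) \to T_{\SS_+}(-1,-1) \to 0,
\end{equation*}
produces a surjection $\wt T_4 \twoheadrightarrow T_{\SS_+}(-1,-1)$ with kernel $K$ fitting in
\begin{equation*}
    0 \to \Oc(-1,-1) \to K \to \Vc^\vee(-2,0) \to 0.
\end{equation*}
Set $\Fc := \wt T_4 \times_{T_{\SS_+}(-1,-1)} \wt T(0,-1)$.

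\textbf{Recovering the two sequences.} By construction of the fiber product, the projection $\Fc \twoheadrightarrow \wt T_4$ has kernel $\ker(\wt T(0,-1) \to T_{\SS_+}(-1,-1)) = \Oc(-1,-1)$, which is exactly the second short exact sequence of the lemma. The projection $\Fc \twoheadrightarrow \wt T(0,-1)$ has kernel $K$; to obtain \eqref{sesF}, it remains to identify $K \cong \Uc_+^\vee(-2,0)$. Dualizing and twisting \eqref{relative_euler} presents $\Uc_+^\vee(-2,0)$ as a non-split extension of $\Vc^\vee(-2,0)$ by $\Oc(-1,-1)$, so the identification reduces to showing $\Ext^1_E(\Vc^\vee(-2,0), \Oc(-1,-1))$ is one-dimensional (whence all non-split extensions agree up to isomorphism) and that $K$ is non-split; the first is a direct Borel--Weil--Bott computation using \eqref{weights_U4_shur_powers}, while non-splitness of $K$ reflects the fact that $E$ is not linearly embedded.

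\textbf{Uniqueness and main obstacle.} Uniqueness of \eqref{sesF} is equivalent to $\Ext^1_E(\wt T(0,-1), \Uc_+^\vee(-2,0))$ being one-dimensional; this follows from applying $\Hom(-, \Uc_+^\vee(-2,0))$ to the twisted affine tangent sequence above and computing the resulting terms by Borel--Weil--Bott via \eqref{weights_U4_shur_powers}--\eqref{weights_U_shur_powers} (and an irreducible decomposition of $T_{\SS_+}$). The main obstacle is the Borel--Weil--Bott bookkeeping pinning down these two $\Ext^1$ groups as one-dimensional, and setting up the natural surjection $\wt T_4 \twoheadrightarrow T_{\SS_+}(-1,-1)$ cleanly from the affine tangent data so that the fiber product carries the correct extension class.
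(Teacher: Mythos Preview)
Your approach is correct and closely parallel to the paper's, though organized differently. The paper first builds a \emph{relative affine tangent sequence}
\[
0 \longrightarrow \Vc^\vee(-2,0) \longrightarrow \wt T_4 \longrightarrow \wt T(0,-1) \longrightarrow 0
\]
via a $3\times 3$ diagram combining the affine tangent sequences of $E$ and $\SS_+$ with the relative tangent sequence of $p_+$, and then runs a second $3\times 3$ diagram splicing this with the relative Euler sequence to produce $\Fc$ directly. You instead take the fiber product over $T_{\SS_+}(-1,-1)$ and then identify the kernel $K$ with $\Uc_+^\vee(-2,0)$ a posteriori. These are two ways of reading the same $3\times 3$ square; the paper's route has the advantage that the Euler sequence is built into the diagram, so the identification $K\cong\Uc_+^\vee(-2,0)$ is automatic and no separate non-splitness check is needed.

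The one soft spot in your write-up is the sentence ``non-splitness of $K$ reflects the fact that $E$ is not linearly embedded'': this is not a proof. What you actually need is that the extension class of $\wt T_4$ in $H^1(E,\Omega_E)\cong\CC^2$, namely $c_1(\Oc(1,1))$, has nonzero image under the restriction map $H^1(\Omega_E)\to H^1(\Omega_{E/\SS_+})=H^1(\Vc(1,-1))\cong\CC$ coming from the relative cotangent sequence. This holds because the kernel of that map is spanned by $c_1(\Oc(1,0))$ (the pullback class), and $c_1(\Oc(1,1))$ is not a multiple of it. That is the honest content behind your phrase; once stated this way it is a short check. For uniqueness, the paper resolves $\wt T^\vee\otimes\Uc_+^\vee(-2,1)$ using the sequence $0\to\wt T\to V_{16}\otimes\Oc\to\Uc_+(1,0)\to 0$ rather than the affine tangent sequence you propose, but either resolution reduces to the same Borel--Weil--Bott bookkeeping.
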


\begin{proof}
The existence of $\Fc$ as in Equation  \ref{sesF} is a consequence of:
\begin{equation*}
    \RR Hom(\wt T (0, -1), \shU^{\vee}(-2, 0))=\CC[-1].
\end{equation*}
This computation follows from resolving $\wt T ^\vee\otimes\Uc_+^\vee(-2, 1)$ as a tensor product of $\Uc_+^\vee(-2, 1)$ with the dual of the following sequence \cite[Lemma 2.4]{my_paper_with_riccardo}:
\begin{equation}
    0\arw\wt T \arw V_{16}\otimes\Oc\arw \Uc_+(1, 0)\arw 0.
\end{equation}
By using Equation \ref{relative_euler} iteratively, one can eventually resolve each term with homogeneous irreducible vector bundles on $E$. Then, the result follows by the Borel--Weil--Bott theorem.\\
Now, recall the relative tangent bundle sequence of $p_+: OG(4, 10)\longrightarrow \spinor_+$:
\[
\begin{tikzcd}
    0\arrow[r] & T_{rel} \arrow[r] & T_4 \arrow[r] & p_+^*T\arrow[r] &0,
\end{tikzcd}
\]
where one has $T_{rel}\cong \shU_4^{\vee}(-1, 1)$.
Then, there is a commutative diagram:
\[
\begin{tikzcd}
     &0 \arrow[r]\arrow[d] &\shO(-1, -1) \arrow[r]\arrow[d, hook] &\shO(-1, -1) \arrow[r]\arrow[d, hook] &0\\
     0\arrow[r] & \shU_4^{\vee}(0, -2) \arrow[d, equals]\arrow[r, dashed] & \wt T _4  \arrow[d, two heads]\arrow[r, dashed] & \wt T (0, -1)\arrow[d, two heads]\arrow[r] &0\\
    0\arrow[r] & \shU_4^{\vee}(-2, 0) \arrow[r] & T_4(-1, -1)  \arrow[r] & T(-1, -1)\arrow[r] &0.
\end{tikzcd}
\]
All vertical lines and the top and bottom horizontal lines are exact. Hence, the middle line is exact (relative affine tangent sequence). Combining it with the relative Euler sequence yields:
\[
\begin{tikzcd}
    0\arrow[r] & \shO(-1, -1) \arrow[r] \arrow[d, equals]& \shU_+^{\vee}(-2, 0) \arrow[r] \arrow[d, hook]& \shU_4^{\vee}(-2, 0)\arrow[r]\arrow[d, hook] &0\\
     0\ar{r}&\shO(-1, -1) \arrow[r, dashed] \arrow[d]&\shF \arrow[r, dashed]\arrow[d, two heads] &\wt T _4\arrow[r] \arrow[d, two heads]&0\\
& 0\arrow[r] &\wt T (0, -1)\arrow[r, equals] &\wt T (0, -1) \arrow[r] &0.
\end{tikzcd}
\]
Therefore $\shF$ is an extension of $\wt T _4$ by $\shO(-1, -1)$.
\end{proof}
\subsection{The mutations}\label{sec:the_mutations}
Throughout this section, we will use a ``chess-game'' notation to depict full exceptional collections. That is, we will define some symbols to denote specific groups of objects, and the position of such symbols on a grid (a ``chessboard'') will encode the twist by powers of $\Oc(1, 0)$ and $\Oc(0, 1)$. Twists by $\Oc(1, 0)$ will increase towards the right of the grid, while twists by $\Oc(0, 1)$ will increase moving upward. This notation and its name are reminiscent of \cite{thomas_notes_on_hpd}, but the symbols we use are slightly different.\\
\\
We begin by constructing a full exceptional collection for $\SS_+$. Our starting point is an exceptional collection due to Kuznetsov and Polishchuk \cite[Theorem 9.3]{KP_collections_isotropic}, which we slightly mutate to make it more suitable for our setting.
\begin{proposition}\label{mutated_KP_cokllection}
$\dbcoh(\spinor_+)$ admits a full exceptional collection represented in the following chessboard  (mutated KP collection):
\begin{figure}[ht!]
    \centering
    \includegraphics[width=0.5\linewidth]{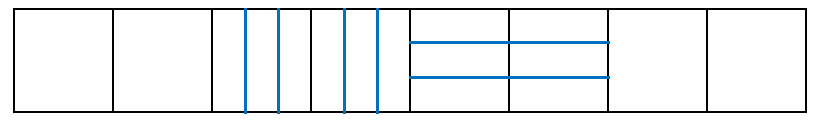}
    \caption{SOD of $\spinor_+$}
    \label{fig:modified_KP}
\end{figure}
\begin{itemize}
\item  The cell \includegraphics[width=0.035\linewidth]{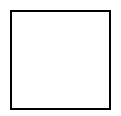} represents $\langle \shO\rangle$.
\item The cell \includegraphics[width=0.035\linewidth]{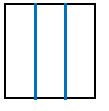}
represents $\langle \wt T ^{\vee}(-1, 0),\shO, \shU_+  ^{\vee}\rangle$.
\item The cell \includegraphics[width=0.035\linewidth]{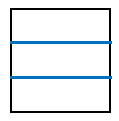}
represents $\langle \shU_+  , \shO, \wt T (1, 0)\rangle$.
\end{itemize}
\end{proposition}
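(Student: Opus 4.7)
The plan is to derive this collection from the full Lefschetz exceptional collection of Kuznetsov--Polishchuk on $\spinor_+$ \cite[Theorem 9.3]{KP_collections_isotropic} by a short sequence of mutations. Since both exceptionality and generation of the derived category are preserved under mutations, once the target chessboard form is reached, fullness will follow automatically, and the main work reduces to identifying and justifying the mutations.

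First, I would recall the KP collection explicitly: as $\spinor_+ = D_5/P_5$ is a Fano tenfold of Picard rank one and index $8$, this collection has eight Lefschetz blocks with respect to the ample generator $h_+$, each generated by $\shO$ and wedge/Schur powers of $\shU_+^\vee$, with block lengths decreasing appropriately. In the chessboard layout of Figure \ref{fig:modified_KP} (which collapses to a single row since $\Pic(\spinor_+)$ has rank one), the short blocks containing only $\shO$ correspond to the cells labelled $\langle\shO\rangle$, while the long blocks are what must be mutated into the block and dualblock cells.

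Second, inside each long block I would replace the pair $\langle \shO, \shU_+^\vee\rangle$ (respectively its Serre--dual pair) by the triple $\langle \wt T^\vee(-1,0), \shO, \shU_+^\vee\rangle$ (respectively $\langle \shU_+, \shO, \wt T(1,0)\rangle$). The bundle $\wt T$ is uniquely characterized by Equation \eqref{affine_tangent_bundle_spinor_plus}; its existence and uniqueness amount to the computation
\begin{equation*}
\Ext^1_{\spinor_+}(T(-1,0), \shO(-1,0)) = \CC,
\end{equation*}
which follows from Borel--Weil--Bott together with the dictionary \eqref{weights_U_shur_powers}. The very same extension identifies $\wt T^\vee(-1,0)$ with the image of $\shO$ under an explicit mutation inside the KP block, and dualizing produces the opposite triple.

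Finally, I would verify exceptionality and semiorthogonality of the new triples by computing $\Ext$ groups between their summands: all tensor products of the form $\wt T^\vee\otimes\wt T$, $\wt T^\vee\otimes\shU_+^\vee$, $\shU_+^\vee\otimes\shO$, etc., can be filtered via \eqref{affine_tangent_bundle_spinor_plus} so that their cohomology reduces to that of homogeneous irreducible bundles on $\spinor_+$, which is then read off by the Borel--Weil--Bott algorithm recalled in Section \ref{sec:orthogonal_grassmannians_and_roofs}. The main obstacle is purely bookkeeping: tracking twists correctly in each cell of the chessboard and checking that the directionality of every $\Ext$ vanishing matches the order of the collection. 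Once the mutations are verified block by block, fullness transfers from the KP collection and the stated chessboard description follows.
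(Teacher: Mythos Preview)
Your overall strategy — start from the Kuznetsov--Polishchuk collection and mutate — is the right one, and it is exactly what the paper does. However, there is a genuine gap in how you propose to carry out the mutations.

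The long KP blocks at twists $2$ and $3$ are not pairs $\langle \shO, \shU_+^\vee\rangle$ but triples $\langle \shO, \shU_+^\vee, \Sym^2\shU_+^\vee\rangle$; the object that has to be mutated in order to produce $\wt T^\vee(-1,0)$ is $\Sym^2\shU_+^\vee$, not $\shO$. Your suggestion that ``the very same extension \eqref{affine_tangent_bundle_spinor_plus} identifies $\wt T^\vee(-1,0)$ with the image of $\shO$ under an explicit mutation inside the KP block'' does not work: that sequence expresses $\wt T^\vee(-1,0)$ as an extension of $\shO$ by $T^\vee$, and $T^\vee$ is not an object of the KP collection, so no mutation inside the KP block is being described. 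What is actually needed is the four-term exact sequence
\[
0\arw \wt T^\vee(-1,0)\arw \VV^{D_5}_{\omega_1}\otimes\shU_+\arw \VV^{D_5}_{2\omega_1}\otimes\shO\arw \Sym^2\shU_+^\vee\arw 0,
\]
which shows that the double left mutation of $\Sym^2\shU_+^\vee$ through $\langle \shU_+,\shO\rangle$ is $\wt T^\vee(-1,0)$. (One first mutates $\shU_+^\vee$ through $\shO$ to get $\shU_+$ via the tautological sequence, and only then does this four-term sequence apply.) Without this sequence — or an equivalent identification of $\LL_{\langle \shU_+,\shO\rangle}\Sym^2\shU_+^\vee$ — your argument does not reach the target block $\langle \wt T^\vee(-1,0),\shO,\shU_+^\vee\rangle$. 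The remaining parts of your plan (Borel--Weil--Bott for the $\Ext$ checks, fullness inherited from KP) are fine once this step is fixed.
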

\begin{proof}
    By \cite{my_paper_with_riccardo}, the following KP collection for $\spinor_+$ is full:
    \begin{equation}\label{KP_collection}
        \begin{split}
        \dbcoh(X) = \langle &\Oc, \Oc(1, 0), \Oc(2, 0), \Uc_+^\vee(2, 0), \Sym^2\Uc_+^\vee(2, 0), \Oc(3, 0), \Uc_+^\vee(3, 0), \Sym^2\Uc_+^\vee(3, 0), \\
        & \hspace{40pt}\Oc(4, 0), \Uc_+^\vee(4, 0), \wt T (5, 0), \Oc(5, 0), \Uc_+^\vee(5, 0),  \wt T (6, 0), \Oc(6, 0), \Oc(7, 0) \rangle.
        \end{split}
    \end{equation}
    To obtain the collection of Figure \ref{fig:modified_KP}, we first mutate $\Uc_+ ^\vee(2, 0)$ and $\Uc_+ ^\vee(3, 0)$ one step to the left. In fact, by the tautological exact sequence together with Lemma \ref{ext_U_O_ingredients} and \cite[Lemma A.2]{my_paper_with_riccardo} one easily sees that $\LL_{\Oc}\Uc_+^\vee \simeq\Uc_+$. Then, we need to mutate the bundles $\Sym^2\Uc_+ ^\vee(2, 0)$ and $\Sym^2\Uc_+ ^\vee(3, 0)$ two steps to the left. One has the following long exact sequence \cite[Equation 2.12]{my_paper_with_riccardo}:
    \begin{equation}\label{eq:partial_big_sequence}
        0\arw\wt T ^\vee(-1, 0)\arw \VV^{D_5}_{\omega_1}\otimes\Uc_+\arw \VV^{D_5}_{2\omega_1}\otimes\Oc\arw \Sym^2\Uc_+^\vee\arw 0.
    \end{equation}
    Hence, by mutating $\Sym^2\Uc_+ ^\vee(2, 0)$ one step to the left, we obtain the kernel of the last map of the sequence \ref{eq:partial_big_sequence} twisted by $\Oc(2, 0)$, and mutating it again one step to the left we find the desired object. All the relevant Exts have been addressed in \cite[Proof of Proposition 3.8]{my_paper_with_riccardo} and local references.
\end{proof}
\subsubsection{Proof of Theorem \ref{main_theorem_flops}}
By Orlov's blow-up formula, we have the two following SODs for $\dbcoh(\wt X)$:
\begin{equation}\label{sod_from_orlov}
    \begin{split}
        \dbcoh(\blX)=\langle  \pi_{\pm}^*D(X_{\pm}), &j_*(p_{\pm}^*D(\spinor_{\pm}), j_*(p_{\pm}^*D(\spinor_{\pm})(h_{\mp}), \\
        &j_*(p_{\pm}^*D(\spinor_{\pm})(2h_{\mp}), j_*(p_{\pm}^*D(\spinor_{\pm})(3h_{\mp})\rangle.
    \end{split}
\end{equation}
Using the collection of Proposition \ref{mutated_KP_cokllection}, the SOD of $^{\perp}D(X_+)$ can be represented on the chessboard of Figure \ref{fig:modified_KP}.
Hereafter, we describe the main steps of the proof by illustrating a sequence of mutations, which identifies the semiorthogonal complements of $\dbcoh(X_+)$ and $\Phi\dbcoh(X_-)$ as admissible subcategories of $\dbcoh(\wt X)$, where $\Phi$ is a suitable equivalence of categories determined by the mutations. While the proof is explained for the case of a flop of type $D_5$ ``over a point'', i.e., with exceptional divisor isomorphic to $OG(4, V_{10})$, the same pattern of mutations will be used for the proof of the relative case of Theorem \ref{main_theorem_flops} and Theorem \ref{main_theorem_pairs} over any base.

The starting point is the decomposition depicted in Figure \ref{fig:starting_point}.
\begin{figure}[ht!]
    \centering
    \includegraphics[width=0.5\linewidth]{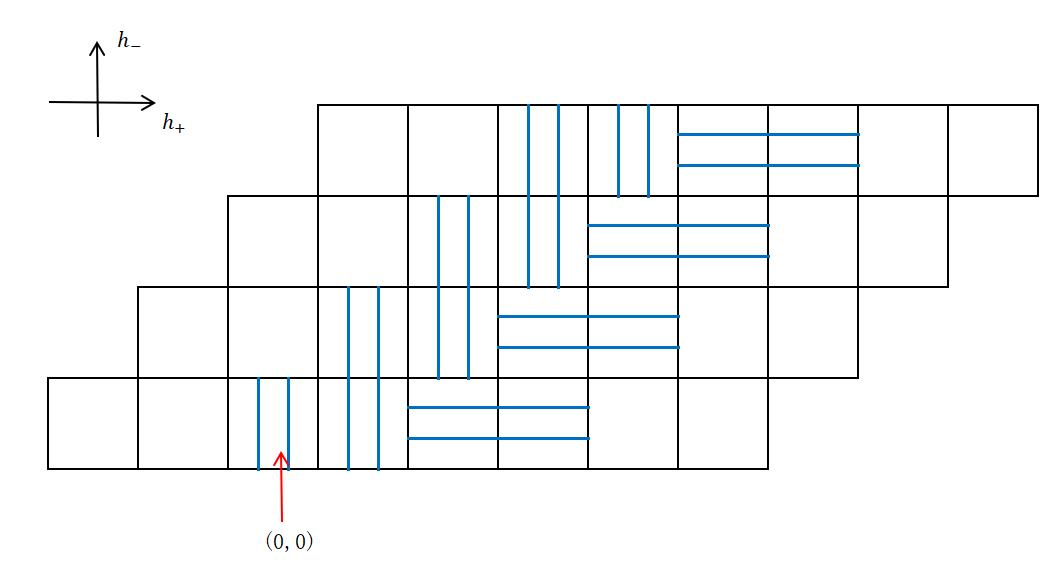}
    \caption{SOD of $^{\perp}\dbcoh(X_+)$}
    \label{fig:starting_point}
\end{figure}

Note that all exceptional objects generating $^{\perp}D(X_+)$ are twists of vector bundles pushed forward to the exceptional divisor. We will therefore omit the functors $p^*$, $q^*$, and $j_*$ for the sake of a clearer exposition.
\begin{description}
\item[Step 1] The first step is to apply the Serre functor to the boxes \includegraphics[width=0.035\linewidth]{O.png} in positions $(7,3)$ and $(8,3)$: They are twisted by the canonical bundle and sent to the beginning of the collection, as white boxes at positions $(3, -1)$ and $(4, -1)$. Then, we mutate $D(X_+)$ to the beginning of the SOD, where it is replaced by
$$\Phi_1 \dbcoh(X_+): = \LL_{\langle  \Oc(3, -1),  \Oc(4, -1)\rangle} \dbcoh(X_+).$$
Also, note that by Lemma \ref{ext_line_bundles_ingredients} and Corollary \ref{cor:ext_summing_all_up} the boxes \includegraphics[width=0.035\linewidth]{O.png} at $(-2,0)$ and $(-1,0)$ are orthogonal to the ones at positions $(3, -1)$ and $(4, -1)$: hence the former can be right-mutated to the far left, and then to the end of the collection by applying the inverse Serre functor, after mutating $\Phi_1\dbcoh(X_+)$ two steps to the right. These moves are described by the red arrows in Figure \ref{m1}, where the chessboard describes the right semiorthogonal complement of $\Phi_2\dbcoh(X)$, with
\begin{equation*}
    \Phi_2 = \RR_{\langle \Oc(-2, 0), \Oc(-1, 0)\rangle} \circ \Phi_1.
\end{equation*}
\begin{figure}[ht!]
    \centering
    \includegraphics[width=0.5\linewidth]{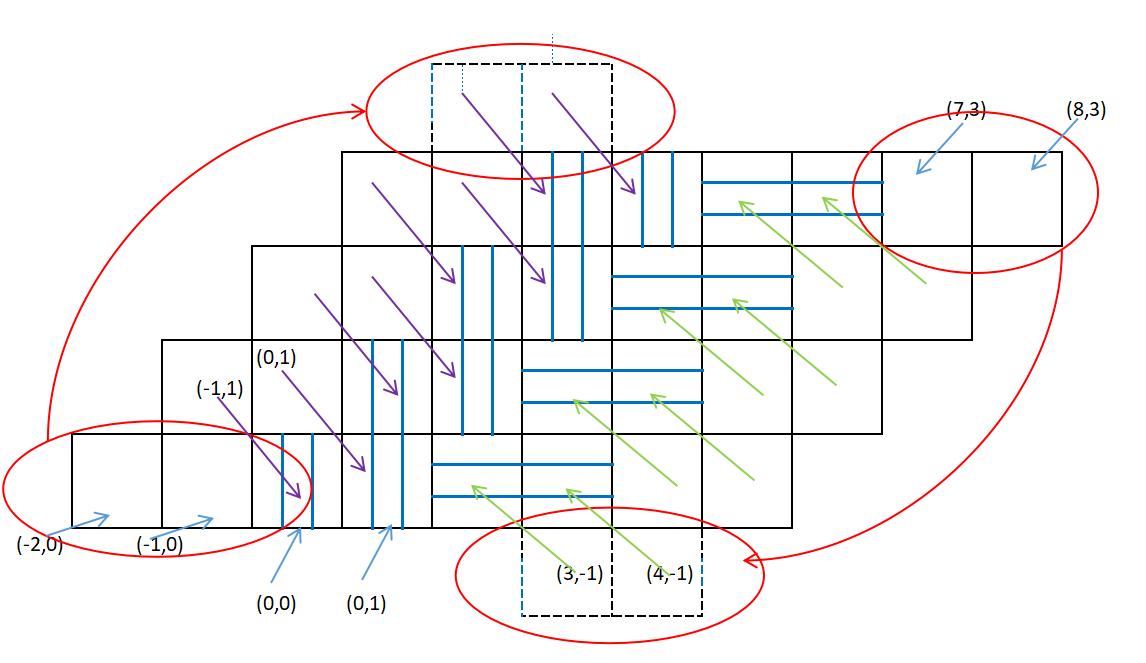}
    \caption{Step 1 and Step 2}
    \label{m1}
\end{figure}
\item[Step 2] Insert blank cells, along the green arrows, into the blocks of shape \includegraphics[width=0.035\linewidth]{dualblock.png}, as depicted in Figure \ref{m1}. For instance, insert \includegraphics[width=0.035\linewidth]{O.png} at $(4,-1)$ and $(3,-1)$ into the cells at $(3,0)$ and $(2,0)$ respectively, which requires to prove that the blocks $\langle  \Oc(2, -1)\rangle$ and $\langle  \wt T ^{\vee}(-3, 0),$ $ \shO(-2, 0),  \shU_+^{\vee}(-2, 0),   \wt T ^{\vee}(-2, 0),  \shO(-1, 0),  \shU_+^{\vee}(-1, 0),  \shU_+,$\\ $ \shO,  \wt T (1, 0)\rangle$ are orthogonal. Again, we use Corollary \ref{cor:ext_summing_all_up} to show that $\langle  \Oc(2, -1)\rangle$ is orthogonal to the twists of $ \Oc$ and $ \Uc_+^\vee$. We also observe that $ \Uc_+\in\langle  \Oc,  \Uc_+^\vee\rangle$ and therefore $\langle  \Oc(2, -1)\rangle$  is orthogonal to $ \Uc_+$ because it is orthogonal to both $ \Oc$ and $ \Uc_+^\vee$. To prove that $ \Oc(2, -1)$ is orthogonal to the twists of $\wt T $ and $\wt T ^\vee$, we use the appropriate twists and duals of the sequence \ref{affine_tangent_bundle_spinor_plus} and Corollary \ref{cor:ext_summing_all_up}.\\
\\
Similarly, insert the cells at $(-1, 1)$ and $(0, 1)$ into the cells at (0,0) and (0,1) respectively, as depicted by the purple arrows in Figure \ref{m1}. The computation of the relevant vanishings is identical to the previous one; hence, it will be omitted.
\item[Step 3] Left-mutate the cells at $(5, 3)$ and $(6, 3)$ to the far left (red arrow in Figure \ref{m2}), and then left-mutate $\Phi_2\dbcoh(X_+)$ through them. We will denote the result by $\Phi_3\dbcoh(X_+)$, where:
\begin{equation*}
    \Phi_3 = \LL_{\langle  \Oc(2, -2),  \shU_+(1, -1)  ,  \shO(1, -1),  \wt T (2, -1),  \Oc(2, -1),  \shU_+(1, 0)  ,  \shO(1, 0),  \wt T (2, 0)\rangle}\circ\Phi_2.
\end{equation*}
Now, let us move the cells according to the orange arrows in Figure \ref{m2}:
\begin{figure}[ht!]
    \centering
    \includegraphics[width=0.35\linewidth]{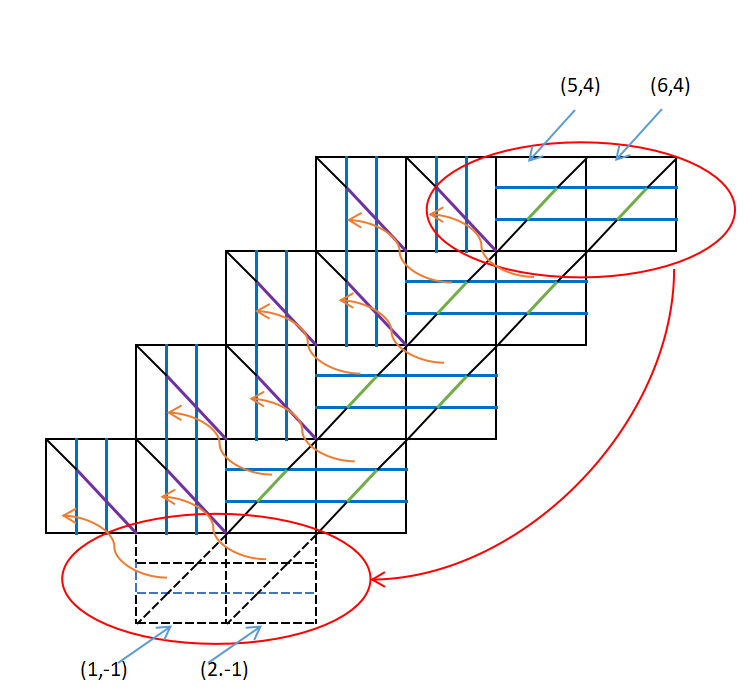}
    \caption{Step 3}
    \label{m2}
\end{figure}
This again requires computing some vanishings. More precisely, we need to show the following orthogonality condition up to global twists:
\begin{equation*}
    \begin{split}
        \langle  \Oc(1, -1),  \shU_+  ,  \shO,  \wt T (1, 0)\rangle \perp
        \langle  &  \wt T ^{\vee}(-3, 1),   \shO(-2, 1),\\
        &  \shU_+^{\vee}(-2, 1),   \Oc(-3, 2)\rangle.    
    \end{split}
\end{equation*}
The proof is essentially as the one of the previous vanishing, except for the term $\wt T ^\vee\otimes\wt T ^\vee$, which can be resolved by taking the tensor product of the sequence \ref{affine_tangent_bundle_spinor_plus} with $\wt T ^\vee$, and then resolving the first and second term with \ref{affine_tangent_bundle_spinor_plus}. The resulting objects can be handled with Corollary \ref{cor:ext_summing_all_up}.\\
\\
We end up with a collection given by twists of the new block $\Cc := $\includegraphics[width=0.035\linewidth]{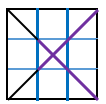}, where
\begin{equation*}
    \begin{split}
        \shC= \langle &  \shO(2, -2),  \shU_+(1, -1),  \shO(1, -1), \\
        &  \wt T (2, -1),  \wt T ^{\vee}(-1, 0),  \shO,  \shU_+^{\vee},  \shO(-1, 1)\rangle.
    \end{split}
\end{equation*}
\item[Step 4] After an application of the Serre functor (red arrow), the collection is represented by the chessboard of Figure \ref{m3}:
\begin{figure}[ht!]
    \centering
    \includegraphics[width=0.35\linewidth]{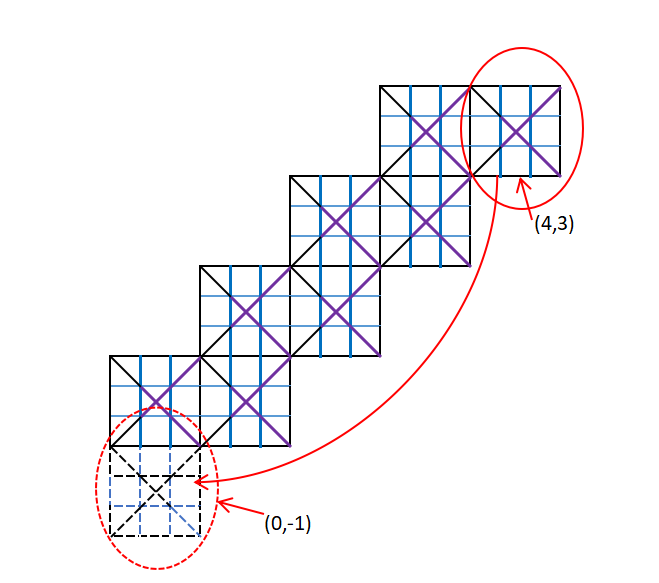}
    \caption{Step 4}
    \label{m3}
\end{figure}
\item[Step 4.1] The categories $\langle\wt T (2, -1)\rangle$ and $\langle \wt T ^{\vee}(-1, 0), \shO\rangle$ are orthogonal to each other. In fact, in light of Lemma \ref{we_can_use_koszul}, we just need to prove that the bundles $\wt T ^\vee\otimes\wt T ^\vee(-3, 1)$, $\wt T ^\vee(-2, 1)$ on $E$ and their twists by $\Oc(-1, -1)$ have no cohomology: by means of (appropriate duals and twists of) the sequences \ref{affine_tangent_bundle_spinor_plus} and \ref{relative_euler}, we can easily reduce the computation to the cohomology of twists of symmetric and wedge powers of $\Uc_4^\vee$, which have already been proven to have no cohomology (see Corollary \ref{cor:ext_summing_all_up}). Similarly, we can exchange $\wt T ^{\vee}(-1,0)$ and $\shO(1, -1)$ because the relevant Ext spaces are exactly the same which appear in the previous orthogonality check. The resulting expression for $\Cc$ is:
\begin{equation*}
    \begin{split}
        \shC=\langle &  \shO(2, -2),  \shU_+(1, -1),  \wt T ^{\vee}(-1, 0),  \shO(1, -1), \\
        &  \shO,  \wt T (2, -1),  \shU_+^{\vee},  \shO(-1, 1)\rangle.
    \end{split}
\end{equation*}
\item[Step 4.2]
Next, observe that by Corollary \ref{cor:ext_summing_all_up} and the usual approach we can right-mutate $\wt T (2, -1)$ through $\shU_+^{\vee}$, and left-mutate $\wt T ^{\vee}(-1, 0)$ through $\shU_+(1, -1)$. Then, in light of Lemma \ref{F_as_an_extension}, the resulting expression for $\Cc$ is:
\begin{equation*}
    \begin{split}
        \shC=\langle & \shO(2, -2),\shF^{\vee}(-1, -1),\shU_+(1, -1),\\
        & \shO(1, -1), \shO, \shU_+^{\vee}, \shF(2, 0),\shO(-1, 1)\rangle,
    \end{split}
\end{equation*}
where the mutation yielding $\Fc^\vee$ follows from the dual of the sequence \ref{sesF}. 
In the following steps, we will perform additional mutations inside $\Cc$. We will use the sequences \ref{relative_euler}, \ref{sesF} and their duals and twists to reduce the relevant computations to the cohomology of irreducible homogeneous vector bundles. Since these operations are identical to those we illustrated in the previous step, with no additional technique introduced, we will omit the explicit computations.

\item[Step 4.3] By Lemma \ref{ext_line_bundles_ingredients}, we see that $\shO$ and $\shO(1, -1)$ are orthogonal: let us exchange them, and then right-mutate $\shU_+(1, -1)$ through $\shO$ and left-mutate $\shU_+^{\vee}$ through $\shO(1, -1)$. The last two mutations follow from the fact that:
\begin{equation}
    \Ext^\bullet_{\wt X}(\Uc_+(1, -1), \Oc) = \Ext^\bullet_{\wt X}(\Oc(-1, 1), \Uc_+^\vee) = \CC[0],
\end{equation}
together with the sequence \ref{relative_euler} and its dual. We get:
$$\shC=\langle \shO(2, -2),\shF^{\vee}(-1, -1), \shO,\shU_4(1, -1),\shU_4^{\vee},\shO(1, -1), \shF(2, 0),\shO(-1, 1)\rangle.$$
\item[Step 4.4] By the dual of \ref{sesF} and Lemmas \ref{ext_line_bundles_ingredients}, \ref{ext_U_O_ingredients}, \ref{cor:ext_summing_all_up} we see that we can exchange $\shO(2, -2)$ and $\langle \shF^{\vee}(-1, -1),\shO\rangle$. Then, observe that:
\begin{equation*}
    \Ext_{\wt X}^\bullet(\Uc_4(1, -1), \Oc(2, -2)) = \CC[-1].
\end{equation*}
This, together with the sequence \ref{relative_euler_minus_side}, allows to left-mutate $\shU_4(1, -1)$ through $\shO(2, -2)$ obtaining $\Uc_-(1, -1)$. After a similar operation to the objects $\shU_4^{\vee}$, $\shF(2, 0)$, $\shO(-1, 1)$ we find:
$$\shC=\langle \shF^{\vee}(-1, -1), \shO,\Uc_- (1, -1),\shO(2, -2),\shO(-1, 1),\Uc_4^{\vee},\shO(1, -1), \shF(2, 0)\rangle.$$
\item[Step 4.5] Exchange $\shO(2, -2)$ and $\langle \shO(-h_+ h_-),\Uc_-^\vee , \shO(1, -1)\rangle$, and then exchange $\shO(-1, 1)$ and $\langle \shO, \Uc_- (1, -1)\rangle$:
$$\shC=\langle \shF^{\vee}(-1, -1),\shO(-1, 1), \shO,\Uc_- (1, -1),\Uc_-^\vee ,\shO(1, -1),\shO(2, -2),\shF(2, 0)\rangle.$$
% \item[Step 4.6] Exchange $\Uc_- (1, -1)$ and $\Uc_-^\vee $, and then left-mutate $\Uc_-^\vee $ through $\langle\shO(-1, 1),\shO\rangle$ and similarly for $\Uc_- (1, -1)$:
% $$\shC=\langle \shF^{\vee}(-1, -1),\shQ^{\vee},\shO(-1, 1), \shO,\shO(1, -1),\shO(2, -2),\shQ^{\vee}(1, -1),\shF(2, 0)\rangle$$
% \item[Step 4.7] Exchange $\shO$ and $\shO(1, -1)$:
% $$\shC=\langle \shF^{\vee}(-1, -1),\shQ^{\vee},\shO(-1, 1), \shO(1, -1),\shO,\shO(2, -2),\shQ(1, -1),\shF(2, 0)\rangle$$
% Write $\shA=\langle \shF^{\vee}(-1, -1),\shQ^{\vee},\shO(-1, 1), \shO(1, -1)\rangle$, and then 
% $$\shC=\langle \shA, \shA^{\vee}(1, -1)\rangle$$

% \item[Step 4.3] Exchange $\shO(2, -2)$ and $\shF^{\vee}(-1, -1)$, and then left-mutate $\shU(1, -1)$ through $\shO(2, -2)$. Similarly for $\shU^{\vee}, \shF(2, 0),\shO(-1, 1)$:
% $$\shC=\langle \shF^{\vee}(-1, -1),\shQ^{\vee}(1, -1), \shO(2, -2),\shO(1, -1), \shO, \shO(-1, 1),\shQ, \shF(2, 0),\rangle$$

% \item[Step 4.4] Since all $\shO(2, -2),\shO(1, -1), \shO, \shO(-1, 1)$ are orthogonal to each other, we change them into the following:
% $$\shC=\langle \shF^{\vee}(-1, -1),\shQ^{\vee}(1, -1), \shO(-1, 1),\shO(1, -1),\shO,\shO(2, -2),\shQ, \shF(2, 0)\rangle$$

% $\langle \shO(2, -2),\shF^{\vee}(-1, -1),\shU(1, -1)\rangle=\langle \shF^{\vee}(-1, -1),\shQ^{\vee}(1, -1),\shO(2, -2)$

% Exchange $\shO(1, -1)$ and $\shO$, and then mutate $\shU(1, -1)$ through $\shO$ and mutate $\shU^{\vee}$ through $\shO(1, -1)$:
% $$\shC=\langle \shO(2, -2),\shF^{\vee}(-1, -1),\shO,\shU_4(1, -1), \shU_4^{\vee}, \shO(1, -1), \shF(2, 0),\shO(-1, 1)\rangle$$
\item[Step 4.6] Exchange $\Uc_- (1, -1)$ and $\Uc_-^\vee $. 
$$\shC=\langle \shF^{\vee}(-1, -1),\shO(-1, 1),\shO,\Uc_-^\vee ,\Uc_- (1, -1),\shO(1, -1),\shO(2, -2),\shF(2, 0)\rangle.$$
Write $\shA=\langle \shF^{\vee}(-1, -1),\shO(-1, 1),\shO,\shU^{\vee}\rangle$, and then
% \begin{equation*}
%         \shC=\langle \shA, \shA^{\vee}(1, -1)\rangle:=
% \begin{minipage}{0.1\textwidth}
%     \centering
%     \includegraphics{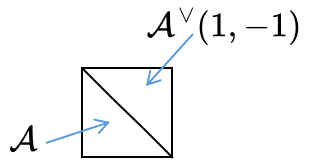}
% \end{minipage}
% \end{equation*}
% and then left(resp. right) mutate $\Uc_-^\vee $ (resp. $\shU_-(1, -1)$) through $\shO$(resp. $\shO(1, -1)$):
% $$\shC=\langle \shF^{\vee}(-1, -1),\shO(-1, 1),\Uc_- ,\shO,\shO(1, -1),\Uc_-^\vee (1, -1),\shO(2, -2),\shF(2, 0)\rangle$$
% Write $\shA=\langle \shF^{\vee}(-1, -1),\shO(-1, 1),\Uc_- ,\shO\rangle$. 
$$\shC=\langle \shA, \shA^{\vee}(1, -1)\rangle:=\includegraphics[width=0.2\linewidth]{block1.png}.$$
The right orthogonal of $\Phi_3\dbcoh(X_+)$ can be represented by the following:
\begin{figure}[ht!]
    \centering
    \includegraphics[width=0.35\linewidth]{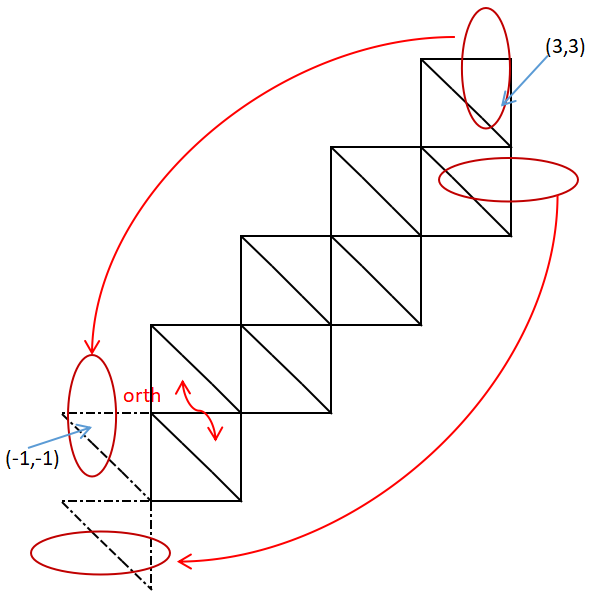}
    \caption{Step $5.1$}
    \label{m4}
\end{figure}
\end{description}
Observe that, by a simple but tedious computation, the blocks $\Ac$ and $\Ac(1, -2)$ are orthogonal.
\begin{description}
\item[Step 5.1] Left-mutate the upper half cells at $(3,2)$ and $(3,3)$ to the far left, as depicted by the long red arrows of Figure \ref{m4}. 
We introduce the following new block:
$$\shB=\langle \shA^{\vee}(0, -3), \shA^{\vee}(0, -2),\shA(0, -1),\shA\rangle=\includegraphics[width=0.1\linewidth]{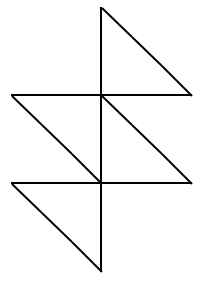}.$$
With this notation, one has:
\begin{equation*}
    ^{\perp}\Phi_4\dbcoh(X_+)\cong \langle \shB, \shB(1, 1), \shB(2, 2),\shB(3, 3)\rangle,
\end{equation*}
where:
\begin{equation*}
    \Phi_4 = \LL_{\langle\Ac(-1, -2), \Ac(-1, -1)\rangle}\circ\Phi_3.
\end{equation*}
\item[Step 5.2] The next step consists of performing some additional mutations inside the block $\shB$ in order to obtain pullbacks from $X_-$. Let us write $\Bc$ explicitly:
\begin{align*}
 \shB=\langle &\boxed{\shU_-(0, -3),\shO(0, -3),\shO(1, -4), \shF(1, -2),\shU_-(0, -2), \shO(0, -2)},\shO(1, -3),\\ & \shF(1, -1), \shF^{\vee}(-1, -2),\shO(-1, 0),\shO(0, -1),\Uc_-^\vee (0, -1),\shF^{\vee}(-1, -1),\shO(-1, 1),\shO,\Uc_-^\vee \rangle,
\end{align*}
and let us focus on the rectangular box.
Recall that $\shF(1, -2)$ fits into the short exact sequence (\ref{sesF}) (using the projection onto $\spinor_-$):
\[
\begin{tikzcd}
    0\arrow[r] & \Uc_-^\vee (1, -4) \arrow[r] &\shF(1, -2) \arrow[r] &\wt T ^-(0, -2) \arrow[r] &0.
\end{tikzcd}
\]
Also recall that $\wt T ^-(0, -2)$ fits into:
\[
\begin{tikzcd}
    0\arrow[r] & \wt T ^-(0, -2) \arrow[r] &\shO(0, -2)\otimes V_{16} \arrow[r] &\shU_-(0, -3) \arrow[r] &0,
\end{tikzcd}
\]
and the Euler sequence:
\[
\begin{tikzcd}
    0\arrow[r] & \shU_-(1, -4) \arrow[r] &\shO(1, -4)\otimes V_{10} \arrow[r] &\Uc_-^\vee (1, -4) \arrow[r] &0.
\end{tikzcd}
\]
Then the rectangular box is equivalent to
$$\langle\shU_-(0, -3),\shO(0, -3),\shU_-(0, -2), \shO(0, -2),\Uc_- (1, -4),\shO(1, -4)\rangle.$$
Similar mutations can be applied to $\shF(1, -1), \shF^{\vee}(-1, -2)$ and $\shF^{\vee}(-1, -1)$ in $\shB$. If we adopt the notation $\includegraphics[width=0.035\linewidth]{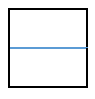}=\langle \shU_-, \shO\rangle$, we get:
\begin{figure}[ht!]
    \centering
    \includegraphics[width=0.3\linewidth]{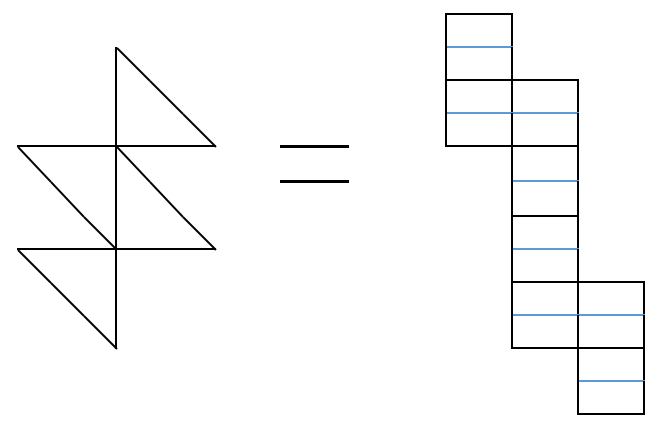}
    \caption{Mutation inside $\shB$ ($\includegraphics[width=0.035\linewidth]{b4.png}=\langle \shU_-, \shO\rangle$)}
    \label{fig:enter-label}
\end{figure}
% Here, $\includegraphics[width=0.035\linewidth]{b4.png}=\langle \shU_-, \shO\rangle$
\item[Step 5.3] By plugging Kuznetsov's collection into the SOD \ref{sod_from_orlov} (minus side), we find the chessboard for $^\perp\dbcoh(X_-)$ in Figure \ref{fig:last_serre_functor} (omitting the dashed boxes and arrows). 
\begin{figure}[h!]
    \centering
    \includegraphics[width=0.3\linewidth]{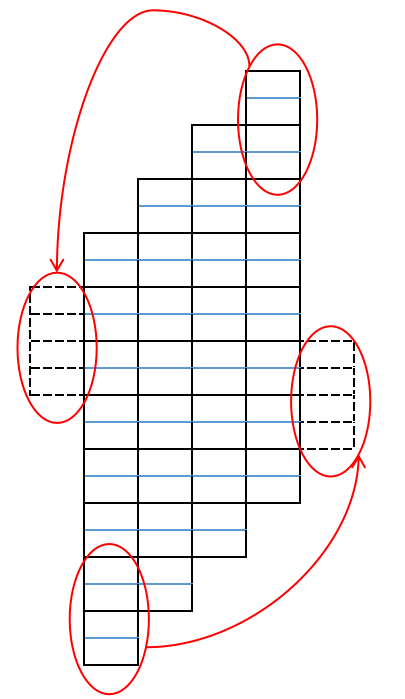}
    \caption{Final mutation}
    \label{fig:last_serre_functor}
\end{figure}
If we now apply the Serre functor as depicted by the red arrows in Figure \ref{fig:last_serre_functor}, we obtain the same chessboard that describes $^\perp\Phi_4\dbcoh(X_+)$, and it concludes the proof of Theorem \ref{main_theorem_flops}.
\end{description}

\subsection{Flops of type \texorpdfstring{$D_5$}{} over a base}\label{sec:relative_flops}
In this section, we show that the proof of Theorem \ref{main_theorem_flops} holds for a broader class of flops. 
Let $G :=\operatorname{Spin}(V_{10})$ be the simply connected algebraic group of type $D_5$, and $P_{4,5}$ be the parabolic subgroup of $G$ associated to the fourth and fifth simple roots so that $OG(4, V_{10}) = G/P_{4,5}$ holds. Given a principal $G$-bundle ($G$-torsor) $\Uc_4$ on $B$, one has the associated locally trivial fibre bundle $\Pc\times^G G/P_{4,5}$ over $B$, where the notation $\times^G$ denotes the quotient of the product by the equivalence relation $(g.t, v)\simeq (t, g.v)$ for all $g\in G$ and $(t, v)\in \Uc_4\times G/P_{4,5}$. Recall that the spinor varieties of $D_5$ are also $G$-homogeneous: $\spinor_+=G/P_4$, $\spinor_-=G/P_5$, and there are associated locally trivial fibre bundles $\Pc\times^G G/P_4$ and $\Pc\times^G G/P_5$ over $B$. Similarly to (\ref{diagram_D5_roof}), there is a roof structure on $\Pc\times^G G/P_{4,5}$ :
\begin{equation*}
\begin{tikzcd}[row sep = huge]
                        & \Pc\times^G G/P_{4,5}  \arrow[dl, swap, "p_+"] \arrow[dr,"p_-"] \\
                         \Pc\times^G G/P_4 &  &\Pc\times^G G/P_5.                 
\end{tikzcd}
\end{equation*}
Then there is a simple flop $\shX_+\dashrightarrow\shX_-$ 
with the exceptional divisor isomorphic to $\Pc\times^G G/P_{4,5}$. This flop is resolved by blowups of $\shX_\pm$ in smooth centers of codimension five, isomorphic to $\Pc\times^G \spinor_{\pm}$.
We can adapt Diagram \ref{diagram_D5_flop_over_a_point} to the present setting:
\begin{center}
\begin{tikzcd}[row sep = huge]\label{diagram_D5_flop_over_a_base}
&&\Pc\times^G G/P_{4,5} \arrow[ddll,swap,"p_+"] \arrow[d, hook,"j"]\arrow[ddrr,"p_-"] \\
&&  \arrow[dl,swap,"\pi_+"]  \widetilde{\shX} \arrow[dr,"\pi_-"]\\
  \Pc\times^G \spinor_{+} \arrow[r,hook,"i_+"]& \shX_+ \arrow[rr,dashed, "\mu"]    & & \shX_- & \arrow[l, hook',swap, "i_-"]\Pc\times^G \spinor_{-}. 
\end{tikzcd}
\end{center}
% Namely, consider a vector bundle $\Ec\arw B$ of rank $10$ on a smooth projective variety $B$, and the simple flop $X_+\dashrightarrow X_-$ with exceptional divisor isomorphic to the orthogonal Grassmann bundle $\Oc\Gc(4, \Ec)$. This flop is resolved by blowups of $X_\pm$ in smooth centers of codimension five, isomorphic to the orthogonal Grassmann bundles $\Oc\Gc(5,\Ec)^\pm$.
% We can adapt Diagram \ref{diagram_D5_flop_over_a_point} to the present setting:
% \begin{center}
% \begin{tikzcd}[row sep = huge]\label{diagram_D5_flop_over_a_base}
% &&E=\Oc\Gc(4,\Ec) \arrow[ddll,swap,"p_+"] \arrow[d, hook,"j"]\arrow[ddrr,"p_-"] \\
% &&  \arrow[dl,swap,"\pi_+"]  \widetilde{X} \arrow[dr,"\pi_-"]\\
%   \Oc\Gc_+(5,\Ec) \arrow[r,hook,"i_+"]& X_+ \arrow[rr,dashed, "\mu"]    & & X_- & \arrow[l, hook',swap, "i_-"]\Oc\Gc_-(5,10) 
% \end{tikzcd}
% \end{center}
We can construct an exact functor $\mathfrak F$ sending homogeneous vector bundles over $OG(4, V_{10})$ to vector bundles over $\Pc\times^G G/P_{4,5}$. This equivalence factors through the category of $P_{4,5}$-modules (where the functor $F$ maps homogeneous vector bundles to the associated modules):
        \begin{equation}\label{eq_relativizationfunctor}
            \begin{tikzcd}[column sep = huge, row sep = large]
                \operatorname{Vect}^{P_{4,5}}(G/P_{4,5}) \ar{rr}{\mathfrak F:=\Pc\times^G G\times^{P_{4,5}} (-) \circ F}\ar[swap]{dr}{F} & & \operatorname{Vect}(\Pc\times^G G/P_{4,5})\\
                & P_{4,5}-\operatorname{Mod}\ar[swap]{ur}{\Pc\times^G G\times^{P_{1,2}} (-)} &.
            \end{tikzcd}
        \end{equation}
For the details of this construction, we refer to \cite{mypaper_relative_grassmann_flips} and the references therein.
Recall that, by applying \cite[Theorem 3.1]{samokhin}, given a flag bundle $\pi:\Pc\times^G G/P\arw B$ with $B$ smooth projective, and a full exceptional collection $\dbcoh(G/P) = \langle V_1,\dots, V_n\rangle$, one has a semiorthogonal decomposition:
\begin{equation*}
    \dbcoh(\Pc\times^G G/P) = \langle \mathfrak F(V_1)\otimes\pi^*\dbcoh(B), \dots, \mathfrak F(V_n)\otimes\pi^*\dbcoh(B) \rangle.
\end{equation*}
The following result will be presented in a slightly more general setting than what we need for the purpose of this work. We will focus on left mutations, since the proof for right mutations is identical.
\begin{lemma}\label{lem:mutations}
    Let $G/P$ be a rational homogeneous variety, and $X = \Pc\times^G G/P$ a flag bundle over a smooth projective variety $B$. Consider a triple $V_1, V_2, V_3$ of homogeneous, exceptional vector bundles (though not necessarily homogeneous irreducible) on $G/P$, such that they satisfy the following relation in $\dbcoh(G/P)$:
    \begin{equation*}
        \LL_{V_1} V_2 \simeq V_3.
    \end{equation*}
    Then, one also has:
    \begin{equation*}
        \LL_{\mathfrak F(V_i)} \mathfrak F(V_j) \simeq \mathfrak F(V_k).
    \end{equation*}
\end{lemma}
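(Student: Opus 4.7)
My plan is to transport the distinguished triangle that defines the mutation on $G/P$ to one on $X$ via the exact functor $\mathfrak{F}$, and then to identify the resulting triangle with the one characterising $\LL_{\mathfrak{F}(V_1)} \mathfrak{F}(V_2)$ in the $\pi^*\dbcoh(B)$-linear category $\dbcoh(X)$, where $\pi:X\to B$ denotes the bundle projection.

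First, I would rewrite the hypothesis $\LL_{V_1} V_2 \simeq V_3$ as the existence of a distinguished triangle
\[
V_3 \to \RR\Hom_{G/P}(V_1, V_2) \otimes V_1 \xrightarrow{\mathrm{ev}} V_2 \to V_3[1]
\]
in $\dbcoh(G/P)$, with $\mathrm{ev}$ the canonical evaluation. Since $V_1, V_2$ are homogeneous, both $\RR\Hom_{G/P}(V_1, V_2)$ and $\mathrm{ev}$ carry natural $G$-equivariant structures, so the whole triangle lifts to the $G$-equivariant derived category and, via the intermediate equivalence $F$ of \eqref{eq_relativizationfunctor}, to the derived category of $P$-modules. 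Because $F$ and the associated-bundle construction $\Vc \times^G G \times^P (-)$ are both exact, $\mathfrak{F}$ is exact, and applying it termwise produces a distinguished triangle
\[
\mathfrak{F}(V_3) \to \mathfrak{F}(\RR\Hom_{G/P}(V_1, V_2)) \otimes \mathfrak{F}(V_1) \to \mathfrak{F}(V_2) \to \mathfrak{F}(V_3)[1]
\]
in $\dbcoh(X)$, the middle factor being the pullback to $X$ of the complex on $B$ obtained by relativising the $G$-module $\RR\Hom_{G/P}(V_1, V_2)$.

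Second, I would match this triangle with the defining triangle of the mutation on $X$. The key input is the base-change identification
\[
\RR\pi_* \sHom(\mathfrak{F}(V_1), \mathfrak{F}(V_2)) \simeq \mathfrak{F}(\RR\Hom_{G/P}(V_1, V_2))
\]
of complexes on $B$, which follows from flat base change along the locally trivial $G/P$-fibration $\pi$: fibrewise the pushforward computes $\RR\Gamma(G/P, V_1^\vee \otimes V_2)$ equipped with its $G$-structure, and the associated-bundle construction assembles these fibrewise data into the right-hand side. Together with the $\pi^*\dbcoh(B)$-linearity of the whole triangle, this places $\mathfrak{F}(V_3)$ in the triangle characterising $\LL_{\mathfrak{F}(V_1)} \mathfrak{F}(V_2)$ relatively over $B$, which yields the claimed isomorphism.

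The main obstacle I foresee is verifying that, under the base-change identification above, the morphism $\mathfrak{F}(\mathrm{ev})$ coincides with the canonical evaluation on $X$ used to define the mutation. I would handle this by $G$-equivariance: the statement is natural in $V_1, V_2$ and reduces to a fibrewise check where $X$ trivialises to $G/P$ and the claim becomes tautological. The same argument, with the roles of the $V_i$ permuted and with right mutations in place of left, covers the remaining cases.
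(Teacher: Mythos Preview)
Your proposal is correct and follows essentially the same approach as the paper: both arguments apply the exact functor $\mathfrak{F}$ to the mutation triangle on $G/P$ and identify the result with the defining triangle of $\LL_{\mathfrak{F}(V_1)}\mathfrak{F}(V_2)$ on $X$, the key computation in each case being the identification of $R\pi_*\sHom(\mathfrak{F}(V_1),\mathfrak{F}(V_2))$ with the relativisation of $R\Hom_{G/P}(V_1,V_2)$. Your explicit check that $\mathfrak{F}(\mathrm{ev})$ agrees with the canonical evaluation on $X$ is a point the paper leaves implicit, so in that respect your write-up is slightly more careful.
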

\begin{proof}
    Consider the functor:
    \begin{equation*}
        \begin{tikzcd}[row sep = tiny, column sep = large, /tikz/column 1/.append style={anchor=base east} ,/tikz/column 2/.append style={anchor=base west}]
            f_{V_i}: \dbcoh(B) \ar[hook]{r} & \dbcoh(X)\\
            E \ar[maps to]{r} & \pi^* E \otimes \mathfrak F(V_i).
        \end{tikzcd}
    \end{equation*}
    Then, the left mutation of $V_j$ through $V_i$ is defined by the following distinguished triangle:
    \begin{equation}\label{mutation_triangle}
        f_{V_i}f_{V_i}^! \mathfrak F(V_j) \arw \mathfrak F(V_j) \arw \LL_{\mathfrak F(V_i)} \mathfrak F(V_j) \arw f_{V_i}f_{V_i}^! \mathfrak F(V_j)[1]
    \end{equation}
    where $f_{V_i}^!$ is the right adjoint functor of $f_{V_i}$. Explicitly, one has:
    \begin{equation*}
        \begin{tikzcd}[row sep = tiny, column sep = large, /tikz/column 1/.append style={anchor=base east} ,/tikz/column 2/.append style={anchor=base west}]
            f_{V_i}^!: \dbcoh(X) \ar{r} & \dbcoh(B)\\
            W \ar[maps to]{r} & \pi_* R\Hc om_X(\mathfrak F(V_i), W).
        \end{tikzcd}
    \end{equation*}
    Evaluating the latter functor on $\mathfrak F(V_j)$ yields:
    \begin{equation*}
        \begin{split}
            \pi_* R\Hc om_X(\mathfrak F(V_i), \mathfrak F(V_j)) & \simeq H^\bullet(G/P, R\Hc om_{G/P}(V_i, \mathfrak F^! \mathfrak F(V_j))) \\
            & \simeq H^\bullet(G/P, R\Hc om_{G/P}(V_i, V_j)) \\
            & \simeq R Hom_{G/P}(V_i, V_j). \\
        \end{split}
    \end{equation*}
     Plugging this result into the triangle \ref{mutation_triangle} gives:
     \begin{equation}\label{mutation_triangle_II}
        R Hom_{G/P}(V_i, V_j) \otimes \mathfrak F(V_j) \arw \mathfrak F(V_j) \arw \LL_{\mathfrak F(V_i)} \mathfrak F(V_j) \arw R Hom_{G/P}(V_i, V_j) \otimes \mathfrak F(V_j)[1].
    \end{equation}
    On the other hand, consider the triangle defining $\LL_{V_i} V_j$ in $\dbcoh(G/P)$:
    \begin{equation}\label{mutation_triangle_III}
        R Hom_{G/P}(V_i, V_j)\otimes V_j \arw V_j \arw \LL_{V_i} V_j \arw R Hom_{G/P}(V_i, V_j)\otimes V_)[1].
    \end{equation}
    The proof is completed once we note that the triangle \ref{mutation_triangle_II} is the result of applying the (triangulated) functor $\mathfrak F$ to the triangle \ref{mutation_triangle_III}.
\end{proof}
\subsubsection{Proof of Theorem \ref{main_theorem_flops} -- relative case}\label{sec:proof_theorem_flops_relative_case}
The sequence of mutations described in Section \ref{sec:the_mutations} can be applied verbatim to the relative case. In fact, any of such mutations is of the form $\LL_{\iota_*V_i} \iota_*V_j \simeq \iota_*V_k$ or $\RR_{\iota_*V_i} \iota_*V_j \simeq \iota_*V_k$. By applying the functor $\iota_*$ to the triangle \ref{mutation_triangle}, together with Lemma \ref{we_can_use_koszul} and Lemma \ref{lem:mutations}, we see that for all the mutations we consider, the isomorphism $\LL_{\iota_*\mathfrak F(V_i)} \iota_*\mathfrak F(V_j) \simeq \iota_*\LL_{\mathfrak F(V_i)}\mathfrak F(V_j)$ holds, and the right-hand side, again by Lemma \ref{lem:mutations}, is isomorphic to $\mathfrak F(\LL_{V_i} V_j)$. We can therefore proceed with the sequence of mutations described in Section \ref{sec:the_mutations}.

\section{Calabi--Yau fivefolds of type \texorpdfstring{$D_5$}{}}\label{sec:calabi_yau_5folds}
Consider a general section $s\in H^0(OG(4, V_{10}), \Oc(1, 1))$. We can associate to $s$ a pair $(Y_+, Y_-)$ of varieties defined by $Y_\pm:= Z(p_{\pm*} s)\subset \spinor_\pm$. These varieties are smooth Calabi--Yau fivefolds \cite[Lemma 2.8]{mypaper_roofbundles}. In the following, we will refer to this construction as Calabi--Yau pairs (or varieties) of type $D_5$.
\begin{remark}
    As pointed out in \cite{manivel}, zero loci of general sections of $\Uc_+(2, 0)$ in $\spinor_+$ are deformation-equivalent to double-spinor Calabi--Yau fivefolds, which are intersections of the spinor tenfold with its image under a general automorphism of its ambient $\PP^{15}$. The same, of course, holds for zero loci of general sections of $\Uc_-(0, 2)$ in $\spinor_-$. In particular, the family of Calabi--Yau varieties of type $D_5$ is not locally complete, and it describes a divisor in the (locally complete) family of double-spinor Calabi--Yau fivefolds. Calabi--Yau varieties of type $D_5$ are zero loci of the normal bundle of $\SS_+$ in $\PP^{15}$, and hence they can be understood as ``intersections of infinitesimal translates''. This phenomenon also occurs in the case of intersections of general translates of $G(2, 5)$ in its Pl\"ucker embedding \cite{ottemrennemo, borisovcaldararuperry}: elements of a divisor in such family of Calabi--Yau threefolds can be described as zero loci of the normal bundle of $G(2, 5)$ in $\PP^9$.
\end{remark}

\subsection{Properties of Calabi-Yau varieties of type $D_5$}
In this subsection, we will write $\spinor$ for any of the two spinor varieties $\spinor_{\pm}$, and similarly for the tautological bundles $\shU_{\pm}$, half-spinor representations $\Delta_{\pm}$ and CY's $Y_{\pm}$.
\begin{lemma}\label{lem:Y_determines_the_section}
    Consider $\sigma_1, \sigma_2 \in H^0(\SS, \Uc(2))$ general. Then, if $Z(\sigma) = Z(\wt\sigma)$, one has $\sigma_1 = \sigma_2$ up to scalar multiplication.
\end{lemma}
\begin{proof}
    First, let us prove that there exists a map $f$ as in the diagram above, such that the square is commutative:
    \begin{equation*}
        \begin{tikzcd}
            \cdots\ar{r} & \Uc^\vee (-2)\ar[two heads]{r}{\sigma}\ar[swap, dashed]{d}{f} & \Ic_{Y|\spinor}\ar[equals]{d} \\
            \cdots\ar{r} & \Uc^\vee(-2)\ar[two heads]{r}{\wt\sigma} & \Ic_{Y|\spinor}.
        \end{tikzcd}
    \end{equation*}
    Proving that such $f$ exists is equivalent to showing that the following map is surjective:
    \begin{equation*}
        \alpha: \Hom(\Uc^\vee(-2), \Uc^\vee(-2)) \arw \Hom(\Uc^\vee(-2), \Ic_{Y|\spinor}).
    \end{equation*}
    Since $\alpha$ can be obtained by applying the functor $\Hom(\Uc^\vee(-2), -)$ to the Koszul sequence of $\wt\sigma$, the surjectivity of $\alpha$ can be checked by ensuring that 
    $$\Ext^{k-1}(\Uc^\vee(-2), \wedge^k (\shU^{\vee}(-2)))=H^{k-1}(\wedge^k (\shU^{\vee}(-2))\otimes \Uc(2))=0$$ for all $2\leq k\leq 5$. Explicitly for $k = 2, 3$ one has 
    \begin{align*}
        \wedge^k (\shU^{\vee}(-2))\otimes \Uc(2)&=\wedge^k\shU^{\vee}\otimes \wedge^{4}\Uc^{\vee}(-2k)\\
        &=\Ec_{\omega_k+\omega_4+(1-2k)\omega_5}\oplus\Ec_{\omega_{k-1}+2(1-k)\omega_5}.
    \end{align*}
 while for $k = 4$ one has $\wedge^4 (\Uc^\vee(-2))\otimes \Uc(2)\simeq \Ec_{2\omega_4-6\omega_5}\oplus \Ec_{\omega_3-6\omega_5}$, and for $k = 5$ we have $\wedge^5 (\Uc^\vee(-2))\otimes \Uc(2)\simeq \wedge^4\Uc^\vee(-8)\simeq \Ec_{\omega_4-7\omega_5}$.
 The relevant vanishings follow from the Borel--Weil--Bott theorem. By surjectivity of $\alpha$ and the fact that $\Uc$ is exceptional (and therefore simple), we deduce that $\sigma \simeq \lambda\wt \sigma$ for $\lambda\in\CC^*$.
\end{proof}
\begin{lemma}\label{lem:stability}
    The bundle $\Uc^\vee|_Y$ is slope-stable.
\end{lemma}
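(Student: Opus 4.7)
The plan is to deduce slope-stability of $\Uc_-^\vee|_Y$ from Hoppe's criterion, which on a smooth projective variety of Picard rank one reduces stability to a finite list of cohomological vanishings. First, I would verify that $\Pic(Y)\simeq\ZZ\cdot h_-|_Y$: since $\Uc_-(0,2)$ is globally generated with zero locus of codimension equal to its rank, a Lefschetz theorem of Sommese type transports the Picard rank of $\spinor_-$ to $Y$. From $\det\Uc_-^\vee=\Oc_{\spinor_-}(2)$ one gets $c_1(\Uc_-^\vee|_Y)=2\,h_-|_Y$ and $\mu(\Uc_-^\vee|_Y)=2/5$, so Hoppe's criterion reduces the problem to the vanishings
\begin{equation*}
    H^0\bigl(Y,\wedge^k\Uc_-^\vee|_Y(-m)\bigr)=0\qquad\text{whenever } m>2k/5,\ 1\leq k\leq 4.
\end{equation*}
By monotonicity in $m$ (multiplying a hypothetical section by one of $\Oc(m-m_0)$), it suffices to treat the four critical pairs $(k,m)\in\{(1,1),(2,1),(3,2),(4,2)\}$.

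For each such $(k,m)$, I would translate the question into cohomology on $\spinor_-$ using the Koszul resolution of $\Oc_Y$ cut out by the defining section of $\Uc_-(0,2)$,
\begin{equation*}
    0\to\wedge^5\Uc_-^\vee(-10)\to\cdots\to\Uc_-^\vee(-2)\to\Oc_{\spinor_-}\to\Oc_Y\to 0.
\end{equation*}
Tensoring by $\wedge^k\Uc_-^\vee(-m)$ produces a hypercohomology spectral sequence with
\begin{equation*}
    E_1^{-i,q}=H^q\bigl(\spinor_-,\wedge^i\Uc_-^\vee\otimes\wedge^k\Uc_-^\vee\otimes\Oc(-2i-m)\bigr),\qquad 0\leq i\leq 5,
\end{equation*}
converging to $H^\bullet(Y,\wedge^k\Uc_-^\vee|_Y(-m))$; it is then enough to show that every term on the antidiagonal $q=i$ vanishes.

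Each such term I would compute by decomposing $\wedge^i\Uc_-^\vee\otimes\wedge^k\Uc_-^\vee$ into irreducible $GL(5)$-summands via Littlewood--Richardson, twisting by $\Oc(-2i-m)$, and running the Borel--Weil--Bott algorithm on $\spinor_-=D_5/P_5$ recalled in Section~\ref{sec:orthogonal_grassmannians_and_roofs}. The argument is directly parallel to the proof of the preceding lemma and to the vanishings in Lemmas \ref{ext_line_bundles_ingredients}--\ref{tautological_shour_powers_from_minus_side}, and can be carried out with the Python script \cite{python_script}.

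The main obstacle is not conceptual but computational: each Littlewood--Richardson decomposition of $\wedge^i\Uc_-^\vee\otimes\wedge^k\Uc_-^\vee$ on a rank-$5$ bundle produces several summands, and one must verify the singularity of every $\rho$-shifted weight uniformly across the four pairs $(k,m)$ and the six twists $i=0,\dots,5$. A secondary technical point is the identification $\Pic(Y)\simeq\ZZ$, which requires either an explicit ampleness check for $\Uc_-(0,2)$ or a Lefschetz-type statement tailored to vector-bundle zero loci on rational homogeneous varieties.
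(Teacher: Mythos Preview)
Your proposal is correct and follows essentially the same route as the paper: Hoppe's criterion, transfer to $\spinor_-$ via the Koszul resolution of $\Oc_Y$, Littlewood--Richardson decomposition of $\wedge^l\Uc_-^\vee\otimes\wedge^k\Uc_-^\vee$, and Borel--Weil--Bott. The only differences are cosmetic: the paper does not pause on $\Pic(Y)\simeq\ZZ$, and it checks the uniform twist $\wedge^k\Uc_-^\vee|_Y(0,-1)$ for all $1\le k\le 4$ rather than your sharper critical twists $(k,m)\in\{(1,1),(2,1),(3,2),(4,2)\}$ --- the paper's vanishings are slightly stronger but lead to the same Borel--Weil--Bott bookkeeping.
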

\begin{proof}
    We just need to apply Hoppe's criterion \cite[Proposition 1]{jardimmenetprataearp}; that is, it suffices to prove that $\wedge^k\Uc_-^\vee|_Y(0, -1)$ has no sections for $1\leq k\leq 4$. The result follows by applying the Borel--Weil--Bott theorem on the irreducible summands of bundles of the form $\wedge^l\Uc_-^\vee\otimes\wedge^k\Uc_-^\vee(-2l-1)$, for $0\leq l\leq 5$.
\end{proof}
\begin{lemma}\label{lem:Y_determines_U}
Consider the image $\spinor^g$ of $\spinor$ under $g\in\Aut(\PP(\Delta))$ and so is $\Uc^g$. Suppose that $Y$ is the zero locus of a section of $\Uc^g( 2)$. Then $\shU^g|_Y\cong\shU|_Y$.    
\end{lemma}
\begin{proof}
    Consider the two normal bundle sequences for $Y\subset \spinor\subset \PP(\Delta)$ and $Y\subset \spinor^g\subset \PP(\Delta)$:
    $$0\arw N_{Y/\spinor}\arw N_{Y/\PP(\Delta)}\arw N_{\spinor/\PP(\Delta)}|_Y\arw 0,$$
    $$0\arw N_{Y/\spinor^g}\arw N_{Y/\PP(\Delta)}\arw N_{\spinor^g/\PP(\Delta)}|_Y\arw 0,$$
    which gives a morphism $\phi: \shU(2)\cong N_{Y/\spinor}\arw N_{\spinor^g/\PP(\Delta)}|_Y$.
    By stability of $\shU$ (Lemma \ref{lem:stability}), $\phi$ is either trivial or injective. 
    When $\phi$ is trivial, the normal bundle sequences induce $N_{\spinor/\PP(\Delta)}|_Y\cong N_{\spinor^g/\PP(\Delta)}|_Y$, which is just $\shU^g(2)|_Y\cong \shU(2)|_Y$ for the case $n=5$.
    When $\phi$ is injective,  it also gives $\shU^g(2)|_Y\cong \shU(2)|_Y$ for the case $n=5$.
\end{proof}
\begin{lemma}\label{lem:Y_determines_the_spinor}
    The isomorphism class of $\Uc|_Y$ determines the embedding $\iota:Y\arw\spinor$.
\end{lemma}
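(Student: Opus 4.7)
The plan is to recover the embedding $\iota^g$ directly from the bundle $\Uc_-^g|_Y$ by means of the evaluation morphism on global sections. The composition $\eta^g\circ\iota^g\colon Y\hookrightarrow G(5,V_{10})$ corresponds, by the universal property of the Grassmannian, to a rank-five subbundle $\iota^{g\ast}\Uc_-^g\hookrightarrow V_{10}\otimes\Oc_Y$, and since $\eta^g$ is a closed immersion, recovering this inclusion recovers $\iota^g$. The strategy is thus to show that the tautological inclusion is encoded, up to the choice of a linear identification of $H^0(Y,\Uc_-^{g\vee}|_Y)$ with $V_{10}^\vee$, by the bundle $\Uc_-^g|_Y$ alone.

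The central technical ingredient is the identification
\begin{equation*}
\iota^{g\ast}\colon V_{10}^\vee \;=\; H^0(\spinor_-^g,\Uc_-^{g\vee}) \;\xrightarrow{\;\sim\;}\; H^0(Y,\Uc_-^{g\vee}|_Y).
\end{equation*}
I would prove this by tensoring the Koszul resolution of $\Oc_Y\subset\Oc_{\spinor_-^g}$---recall that $Y$ is cut out by a section of $\Uc_-^g(0,2)$, so the resolution has terms $\wedge^k\Uc_-^{g\vee}(-2kh_-)$ for $0\le k\le 5$---with $\Uc_-^{g\vee}$ and running the induced hypercohomology spectral sequence. The $k=0$ column contributes $H^0(\spinor_-^g,\Uc_-^{g\vee})=V_{10}^\vee$ by Borel--Weil--Bott, and the identification is an isomorphism provided all contributions from $k\ge 1$ vanish in the relevant degrees. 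Each summand of $\wedge^k\Uc_-^{g\vee}\otimes\Uc_-^{g\vee}(-2kh_-)$ is a homogeneous irreducible bundle obtained from a Littlewood--Richardson decomposition, and the vanishings then follow from Borel--Weil--Bott in exactly the style of the decomposition in \eqref{eq:some_decomposition_of_a_product} and the checks in the proof of Lemma \ref{lem:stability}.

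Granted this isomorphism, the evaluation map $H^0(Y,\Uc_-^{g\vee}|_Y)\otimes\Oc_Y\twoheadrightarrow\Uc_-^{g\vee}|_Y$ agrees under $\iota^{g\ast}$ with the restriction of the tautological surjection $V_{10}^\vee\otimes\Oc_{\spinor_-^g}\twoheadrightarrow\Uc_-^{g\vee}$; in particular it is surjective, and its dual recovers the tautological inclusion $\Uc_-^g|_Y\hookrightarrow V_{10}\otimes\Oc_Y$. This is precisely the defining datum of $\eta^g\circ\iota^g$, hence of $\iota^g$. Any ambiguity from choosing an abstract isomorphism $\Uc_-^g|_Y\simeq\mathcal U$ amounts to composing this quotient with an automorphism of $\mathcal U$; by Lemma \ref{lem:stability} the bundle is simple, so its automorphisms are scalars and leave the resulting morphism to $G(5,V_{10})$ untouched.

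The main obstacle is the Borel--Weil--Bott bookkeeping for the tensor products $\wedge^k\Uc_-^{g\vee}\otimes\Uc_-^{g\vee}(-2kh_-)$: each has several irreducible summands and a careful application of the weight-shifting algorithm is needed, but no technique beyond those already deployed in Section \ref{sec:D5_flop} enters the argument.
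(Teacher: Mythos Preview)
Your approach---recovering the tautological map $Y\to G(5,V_{10})$ from the evaluation morphism, with the restriction isomorphism $H^0(\spinor_-^g,\Uc_-^{g\vee})\xrightarrow{\sim}H^0(Y,\Uc_-^{g\vee}|_Y)$ established via the Koszul complex and Borel--Weil--Bott---is the same as the paper's. The paper adapts Manivel's Proposition~3.6 and identifies precisely this Koszul computation (``Step~3'') as the only piece needing modification; the resolution you write down is the one the paper displays.

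There is, however, a gap in your final step. What the isomorphism class of $\mathcal E=\Uc_-^g|_Y$ actually produces is a map $Y\to G(5,W)$ with $W=H^0(Y,\mathcal E^\vee)^\vee$; to identify this with $\eta^g\circ\iota^g\colon Y\to G(5,V_{10})$ you invoke the isomorphism $\iota^{g*}\colon V_{10}^\vee\to W^\vee$, but that map is built from $\iota^g$ itself, which is exactly what you are trying to reconstruct. The simplicity of $\mathcal E$ (Lemma~\ref{lem:stability}) kills the ambiguity coming from bundle automorphisms, but not the $GL(W)$-ambiguity in identifying $W$ with $V_{10}$: different choices yield genuinely different maps to $G(5,V_{10})$. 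What you have shown is that $\mathcal E$ determines the embedding of $Y$ into an \emph{abstract} spinor tenfold. To pin down the specific translate $\spinor_-^g\subset\PP^{15}$ one must still tie this abstract tenfold back to the given inclusion $Y\subset\PP^{15}$---for instance by checking, via the same Koszul resolution, that restriction gives $H^0(\spinor_-^g,\Oc(1))\xrightarrow{\sim}H^0(Y,\Oc_Y(1))$, so that the spinor embedding of your abstract tenfold is forced by that of $Y$. These are the remaining steps of Manivel's argument that the paper imports wholesale and that your write-up does not address.
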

\begin{proof}
    The proof is a simple adaptation of \cite[Proposition 3.6]{manivel}. In fact, we only need to ensure that \cite[Proposition 3.6, Step 3]{manivel} holds in our setting. The only difference is the Koszul resolution we use to restrict the bundles:
    \begin{equation*}
        \begin{split}
            0\arw \Oc(-8)\arw \wedge^4\Uc^\vee(-8)\arw\wedge^3\Uc^\vee(-6)\arw \\
            \arw\wedge^2\Uc^\vee(-4)\arw\Uc^\vee(-2)\arw\Oc_{\PP^{15}}\arw\Oc_{Y}\arw 0,
        \end{split} 
    \end{equation*}
    which leads to different computations. Such computations can be easily carried out with the Borel--Weil--Bott theorem, as usual.
\end{proof}
%The following corollary is identical to \cite[Corollary 7.2.5]{my_thesis}, and hence we will omit the proof.
\begin{lemma}\label{lem:contained_in_a_unique_S}
    Call $\SS^g$ the image of $\SS$ with respect of $g\in\Aut(\PP(\Delta))$ as above. Let $Y\subset \spinor$ be the zero locus of a general section of $\Uc(2)$. Then, if $Y\subset\SS^g$ as the zero locus of a general section of $\Uc^g(2)$, one has $\SS = \SS^g$, i.e. $g\in\Aut(\SS)$.
\end{lemma}

\begin{proof}
    This is a direct corollary of the combination of Lemma \ref{lem:Y_determines_U} and Lemma \ref{lem:Y_determines_the_spinor}.
\end{proof}

\begin{lemma}\label{lem:proj_bundle}
    Let $E$ be a vector bundle over any smooth projective variety $X$. Let $f:X\arw X'$ be an isomorphism, asnd $E'$ a vector bundle over $X'$ such that $f_*E = E'$. Assume there is a morphism $\tau_f:\PP(E^\vee)\arw\PP(E'^\vee)$ such that the following diagram commutes:
    \begin{equation*}
        \begin{tikzcd}[row sep = large]
            \PP(E^\vee)\ar{r}{\tau_f}\ar{d}{\pi} & \PP(E'^\vee)\ar{d}{\pi'} \\
            X\ar{r}{f} & X'.
        \end{tikzcd}
    \end{equation*}
    Then one has the following commutative diagram at the level of sections:
    \begin{equation*}
        \begin{tikzcd}[row sep = huge]
            H^0(\PP(E^\vee), \Oc_\pi(1)) \ar{r}{\tau_{f*}}\ar{d}{\pi_*} & H^0(\PP(E'^\vee), \Oc_{\pi'}(1))\ar{d}{\pi'_*} \\
            H^0(X, E) \ar{r}{f_*} & H^0(X', E').
        \end{tikzcd}
    \end{equation*}
\end{lemma}
\begin{proof}
    For any section $s$ of $E$, one has a unique section $\xi_s$ of $\Oc_\pi(1)$ (recall that $\pi_*$ is an isomorphism at the level of sections). Such $\xi_s$ acts as $\xi_s(x, v) = x, v, v\cdot s(x)$ where $x$ and $v$ are respectively coordinate on the base and on the fiber of $\PP(E^\vee)$, and ``$\cdot$'' is a fiberwise pairing between $E$ and $E^\vee$. Then, one clearly has $f_*\pi_*\xi_s = f_*s =: s'$. On the other hand, given any point $(f(x), v')\in\PP(E'^\vee)$, we have $\tau_{f*}\xi_s(f(x), v') = \xi_s \circ \tau_f^{-1}(f(x), v') = f(x), v', v\cdot s(x)$. Therefore, $\tau_{f*}\xi_{s} = s'$.
\end{proof}
\begin{lemma}
    Consider a general section $s$ of $\Oc(1,1)$ and the pair of Calabi--Yau fivefolds $Y_\pm = Z(p_{\pm *}s)\subset \SS_\pm$. Assume there is a projective isomorphism  $f: \PP(\Delta_+)\arw \PP(\Delta_-)$ such that the restriction $f|_{Y_+}: Y_+\arw Y_-$ is also an isomorphism. Then $f$ maps $\SS_+$ isomorphically onto $\SS_-$.
\end{lemma}
\begin{proof}
    By hypothesis one has $Y_-\subset \SS_-\cap f(\SS_+)$. Since one has $f(\SS_+)\simeq \SS_-$, we conclude by Lemma \ref{lem:contained_in_a_unique_S}.
\end{proof}

\subsection{Isomorphisms of orthogonal Grassmannians via Clifford multiplications}
Choose any $A\in V$ with $q(A)=-1$. This $A$ is an invertible element in the Clifford algebra $Cl(V, q)$ satisfying $A=A^{-1}$. Then the Clifford multiplication by $A$ on spinors induces a  (non-canonical) isomorphism $\Delta_+\cong \Delta_-$. Since $A^{-1}=A$, the Clifford multiplication by $A$ also induces the inverse isomorphism $\Delta_-\cong \Delta_+$. Hence, $A$ gives isomorphisms on the spinor varieties simultaneously:  $\SS_+\arw \SS_-$ and $\SS_-\arw \SS_+$. 

Consider a composition $\varphi:\SS_+\arw\SS_+$ as follows:
\begin{equation*}
    \begin{tikzcd}
        \SS_+ \ar[bend right = 30]{rr}{\varphi} \ar{r}{f} & \SS_-\ar{r}{A^{-1}} & \SS_+
    \end{tikzcd}
\end{equation*}
where $f$ is any isomorphism of $\SS_+$ with $\SS_-$. Then, we can associate to $\varphi\in\Aut(\SS_+)$ an element $M_\varphi\in\operatorname{Spin}(V_{10})$. Let $[x]\in \SS_+\subset \PP(\Delta_+)$, then $\varphi([x])=[M_{\varphi}\cdot x]$,  where $"\cdot"$ denotes the Clifford multiplication. 

Note that both $\SS_-$ and $OG(4, V_{10})$ are $\operatorname{Spin}(V_{10})$-homogeneous, and hence $M_{\varphi}\cdot(\text{-})$ induces isomorphisms on $\SS_-$ and $OG(4, V_{10})$ respectively. We may use $\varphi$ to denote the isomorphisms still. More explicitly, let $[y]\in \SS_-\subset \PP(\Delta_-)$, then $\varphi([y])=[M_{\varphi}\cdot y]$. 

The orthogonal Grassmannian $OG(4, V_{10})$ can be identified as the incidence variety of $\spinor_+\times \spinor_-$ by requiring each pair of maximal isotropic subspaces $(V_x, V_y)$ with a codimension equal to one intersection.  Then the automorphism $\varphi: OG(4, V_{10})\arw OG(4, V_{10})$ is given by
$$ ([x], [y])\mapsto ([M_{\varphi}\cdot x], [M_{\varphi}\cdot y]).$$
Note that the Clifford multiplication by $A$ induces an involution $\iota$ on $OG(4, V_{10})$:
$$\iota([x], [y])=([A\cdot y], [A\cdot x]).$$

Let us now denote by $\tau_f$ the restriction to $OG(4, V_{10})\subset \SS_+\times\SS_-$ of the following composition (which we call by the same name):
\begin{equation*}
    \begin{tikzcd}[row sep = small]
        \SS_+\times\SS_- \ar[bend left = 30]{rr}{\tau_f} \ar{r}{\varphi} & \SS_+\times\SS_- \ar{r}{\iota} & \SS_+\times\SS_- \\
        ([x], [y]) \ar[maps to]{r} & ([M_{\varphi}\cdot x], [M_{\varphi}\cdot y]) \ar[maps to]{r} & ([AM_{\varphi}\cdot y], [A M_{\varphi}\cdot x])=([AM_{\varphi}\cdot y], f([x]))
    \end{tikzcd}
\end{equation*}
Once we observe that $OG(4, V_{10})$ is the projectivization  of $\Uc_\pm(2)$ on $\SS_\pm$, and by Lemma \ref{lem:proj_bundle} we deduce that $\tau_f$ preserves the hyperplane section $s\in H^0(OG(4, V_{10}), \shO(1,1))$ defining $Y_\pm$, and we obtain that $$\tau_{f*}(s)= s,$$
up to a scaling. \\
%Let us now describe these maps more explicitly. First, we recall that one has an embedding $\operatorname{Spin}(V_{10})\subset\operatorname{Cl}(V_{10})$, where the right hand side can be isomorphically identified with $\End(\Delta_+\oplus\Delta_-)$. Through such identification, $A:\Delta_+\arw\Delta_-$ lifts to an element:
% \begin{equation*}
% \left(
% \begin{array}{cc}
%     0 & A \\
%     A^{-1} & 0
% \end{array}
% \right)
% \in\End(\Delta_+\oplus\Delta_-),
% \end{equation*}
% In this way, any $f$ can be written as $AM$ for some $M\in\operatorname{Spin}(V_{10})$, and therefore, one has
% \begin{equation*}
%     \tau_f: (x, y) \longmapsto (MA^{-1}y, AMx)
% \end{equation*}
%and to $\varphi$ we can associate a linear operator $M_\varphi\in \Aut(\Delta_+)$.
\subsection{An explicit automorphism of the space of sections}
In the following, let us call $\PP_\pm$ the projective space $\PP(\Delta_{\pm})$ for short. Suppose that there is an isomorphism $f:\spinor^+\arw \spinor^-$.
In last subsection, we construct the following morphism:
\begin{equation}
    \begin{tikzcd}[column sep = huge, row sep = tiny, /tikz/column 1/.append style={anchor=base east} ,/tikz/column 3/.append style={anchor=base west}]
        \PP_+\times\PP_-\ar{r}{\tau_f} & \PP_+\times\PP_-\\
        ([x], [y])\ar[maps to]{r} & ([AM_{\varphi}\cdot y], [AM_{\varphi}\cdot x]),
    \end{tikzcd}
\end{equation}
which fixes the subvariety $OG(4, V_{10})\subset \PP_+\times\PP_-$. Moreover, $\tau_f$ lifts to an automorphism $\wt\tau_f$ of $H^0(\PP_+\times\PP_-, \Oc(1, 1))$ by setting $\wt\tau_f(s) := s\circ\tau_f$.

Let us now describe how $\wt\tau_f$ acts explicitly on sections. A section $s\subset H^0(\PP_+\times\PP_-, \Oc(1, 1))$ can be described by a matrix $S $ in the following way:
\begin{equation}
    \begin{tikzcd}[row sep = tiny, column sep = huge, /tikz/column 1/.append style={anchor=base east} ,/tikz/column 3/.append style={anchor=base west}]
        \PP_+\times\PP_- \ar{r}{s} & \Oc(1, 1) \\
        \displaystyle{[x], [y]} \ar[maps to]{r} & \displaystyle{[x, y, x^T S  y],}
    \end{tikzcd}
\end{equation}
where, as usual, one has $[x\lambda, y\lambda', \lambda\lambda'y^T S  x] = [x, y, x^T S  y]$.

Note that $H^0(OG(4, V_{10}), \Oc(1, 1))^{\vee}\cong V_{\omega_{4}+\omega_5}\subset \Delta_+\otimes \Delta_-\cong H^0(\PP_+\times \PP_-, \shO(1,1))^{\vee}$ is the highest weight subrepresentation. Consider a section  $s\in H^0(OG(n-1, V_{2n}), \Oc(1, 1))\subset H^0(\PP_+\times \PP_-, \shO(1,1))$, viewed as a bilinear form $S\in \Delta_-^{\vee}\otimes \Delta_+^{\vee}$. Then we calculate the matrix representative of $\tau_{f*}(s)$. By definition:
    \begin{equation*}
        \begin{split}
            \wt\tau_f(s)([x], [y]) & = s\circ\tau_f([x], [y])\\
                                   & = [x,y, (AM_{\varphi}y)^T S  AM_{\varphi}x].
        \end{split}
    \end{equation*}
    Since   
    \begin{equation}
        (AM_{\varphi}y)^T S  AM_{\varphi}x=(AM_{\varphi}x)^TS^T AM_{\varphi}y=x^TM_{\varphi}^TA^TS^T AM_{\varphi}y,
    \end{equation}
     we conclude that the matrix representative of $\tau_{f*}(s)$ is
     \begin{equation}
         M_{\varphi}^TA^TS^TAM_{\varphi}.
     \end{equation}
     By Lemma \ref{lem:proj_bundle}, we find that $s=\tau_{f*}s$ up a scaling, 
     which is equivalent to
     \begin{equation}\label{keyeq}
         SA=M_{\varphi}^{T}\cdot (SA)^T\cdot (AM_{\varphi}A).
     \end{equation} 
     Note that $\bar{S}:=SA\in \Delta_+^{\vee}\otimes \Delta_+^{\vee}$ and both $M_{\varphi}$ and  $AM_{\varphi}A \in \operatorname{Spin}(V_{10})$ acts on $\bar{S}$ is induced from the Clifford multiplication on $\Delta_+$. Therefore, we obtain that
\begin{proposition}\label{prop:transposition}
    Choose any degree 1 invertible element $A$ in the Clifford algebra $Cl(V_{10})$ satisfying $A^2=1$. Let $M\in \operatorname{Spin}(V_{10})$ and $M':=AMA\in \operatorname{Spin}(V_{10})$. Consider a section $s\in H^0(OG(4, V_{10}), \Oc(1, 1))$, which represents a matrix $\bar{S}$ by composing $A$. Then 
    Then $\bar{S}= M^T \bar{S} ^T M'$.
\end{proposition}

Finally, we can show the following. 
\begin{theorem}\label{thm:not_isomoprhic_CY5}
    Let $s\in H^0(OG(4, V_{10}), \Oc(1, 1))$ be general. Then the associated Calabi--Yau fivefolds $(Y_+, Y_-)$ are not birationally equivalent.
\end{theorem}
\begin{proof}
Note that general section $s$ corresponds to a general matrix $\bar{S}\in End(\Delta)$. We may assume that $\bar{S}\in GL(\Delta)$.  
    Now suppose that $Y_+$ and $Y_-$ are birationally equivalent. Then the above discussion implies there are no matrices $S\in GL(\Delta)$ and $M\in \operatorname{Spin}(V_{10})$ satisfying the equation $S = M^{T} S^T M'$ up to a scaling. We may apply Manivel's argument in \cite[Proof of Proposition 4.4, "second case"]{manivel}. Let $G=PGL(\Delta), H=PSO(V_{10}), \Delta$ is the half-spin representation of $\operatorname{Spin}(V_{10})$. Consider the  $H\times H^{op}$-action on $G$ defined by $(M, M').S=M^TSM'$. Then $S = M^{T} S^TM'$ implies that $S^T$ and $S$ are in the same $H\times H^{op}$-orbit. The standard representation argument in \cite[Proof of Proposition 4.4]{manivel} implies that $H\subset G$ is a spherical subgroup, and hence the action of $H$ on $G/B$ is quasi-homogeneous. However, 
    \begin{equation*}
        \dim\,H= 45 <\dim\, G/B=\dim\, (U(16)/T^{16})/2=(16^2-16)/2=120.
    \end{equation*}
\end{proof}

\subsubsection{Proof of Theorem \ref{main_theorem_pairs} -- non-relative case}
Consider a pair of Calabi--Yau fivefolds $(Y_+, Y_-)$ defined by a $(1,1)$ section of $OG(4, V_{10})$ as in the previous subsection. Call $M\in OG(4, V_{10})$ the hypersurface given by the vanishing of $s$. Then, by the Cayley trick, $M$ comes with a semiorthogonal decomposition:
\begin{equation}\label{sod_from_cayley_trick}
    \begin{split}
        \dbcoh(M) =\langle  j_{+*}\bar\pi_{+}^*D(Y_{+}), &j_+^*p_{-}^*\dbcoh(\spinor_+), j_+^*p_{-}^*\dbcoh(\spinor_+)(0, 1), \\
        &j_+^*p_{-}^*\dbcoh(\spinor_+)(0, 2), j_+^*p_{-}^*\dbcoh(\spinor_+)(0, 3)\rangle,
    \end{split}
\end{equation}
and a specular one containing $j_{-*}\bar\pi_{-}^*D(Y_{-})$.
As for the problem of the $D_5$ flop, we can prove that $Y_+$ and $Y_-$ are derived equivalent by plugging in $j_{\pm*}\bar\pi_{\pm}^*D(Y_{\pm})$ an appropriate full exceptional collection, and prove that the semiorthogonal complements of $\dbcoh(Y_+)$ and $\dbcoh(Y_-)$ are derived equivalent. In particular, if we use the collection of Proposition \ref{mutated_KP_cokllection}, we obtain the same chessboard as Figure \ref{fig:starting_point} (with the difference that each object is a pullback to the divisor $M$ of a homogeneous vector bundle, instead of a pushforward from a smooth divisor to a smooth ambient variety). The following lemma shows that we can apply the same sequence of mutations of Section \ref{sec:the_mutations} verbatim:
\begin{lemma}\label{we_can_use_koszul_also_for_M}(cf. Lemma \ref{we_can_use_koszul})
    For any $F, G\in\dbcoh(OG(4, V_{10}))$, one can resolve $ \Ext_M^m(j_+^*F, j_+^*G)$ by means of the following exact sequence:
    \begin{equation*}
        \cdots\arw H^m(E, F^\vee\otimes G(-1, -1))\arw H^m(E, F^\vee\otimes G)\arw \cdots\arw  \Ext_M^m(j_+^*F, j_+^*G)\arw \cdots
    \end{equation*}
\end{lemma}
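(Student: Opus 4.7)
The plan is to mimic the structure of Lemma \ref{we_can_use_koszul}, replacing the distinguished triangle associated with the codimension one embedding $j:E\hookrightarrow\wt X$ by the Koszul resolution of $\Oc_M$ viewed as a sheaf on $OG(4,V_{10})$. Since the setting is formally dual to the one of Lemma \ref{we_can_use_koszul} (pullbacks from the ambient to the divisor rather than pushforwards from the divisor to the ambient), this is essentially a direct transcription.

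First I would use adjunction between $j_+^*$ and $j_{+*}$, together with the projection formula, to rewrite
\begin{equation*}
    \Ext_M^m(j_+^*F, j_+^*G) \simeq \Ext_{OG(4,V_{10})}^m(F, j_{+*}j_+^*G) \simeq \Ext_{OG(4,V_{10})}^m(F, G\otimes j_{+*}\Oc_M).
\end{equation*}
This reduces the problem to Ext computations on the ambient $OG(4,V_{10})$.

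Next, since $M$ is the zero locus of a regular section $s$ of $\Oc(1,1)$, the structure sheaf $\Oc_M$ admits the Koszul resolution
\begin{equation*}
    0\arw\Oc(-1,-1)\xrightarrow{\ s\ }\Oc\arw\Oc_M\arw 0
\end{equation*}
on $OG(4,V_{10})$. Tensoring with $G$ remains exact (both outer terms are locally free), and applying the functor $\RR\Hom_{OG(4,V_{10})}(F,-)$ yields the desired long exact sequence
\begin{equation*}
    \cdots\arw \Ext^m_{OG(4,V_{10})}(F,G(-1,-1))\arw \Ext^m_{OG(4,V_{10})}(F,G)\arw \Ext^m_M(j_+^*F,j_+^*G)\arw\cdots
\end{equation*}
Finally, since all $F,G$ appearing in our exceptional collections are (complexes of) vector bundles on $OG(4,V_{10})$, each such global Ext group reduces to a sheaf cohomology group $H^m(OG(4,V_{10}), F^\vee\otimes G(a,b))$, matching the statement in which $E$ denotes the ambient orthogonal Grassmannian $OG(4,V_{10})$.

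There is no real obstacle: the only thing to check is that the projection formula and the Koszul resolution apply, which is immediate from smoothness of $M\subset OG(4,V_{10})$ and from the fact that $M$ is a smooth divisor cut out by a section of $\Oc(1,1)$. The hypothesis on $s$ being sufficiently general (from Section \ref{sec:calabi_yau_5folds}) guarantees smoothness of $M$, hence the embedding $j_+$ is a regular embedding of codimension one, and the entire argument goes through verbatim.
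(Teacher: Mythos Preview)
Your proof is correct and follows essentially the same approach as the paper: adjunction to pass to $\Ext$ on $OG(4,V_{10})$, then the Koszul resolution of $\Oc_M$ tensored with $G$ to produce the long exact sequence. You are slightly more explicit in invoking the projection formula and in reducing $\Ext$ to sheaf cohomology via $F^\vee\otimes G$, but these are exactly the steps the paper leaves implicit.
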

\begin{proof}
    By adjunction, one has:
    \begin{equation*}
        \Ext_M^m(j_+^*F, j_+^*G) \simeq \Ext_{OG(4, V_{10})}^m(F, j_{+*}j_+^*G).
    \end{equation*}
    To obtain the required long exact sequence, we simply need to apply $\Ext_{OG(4, V_{10})}^\bullet(F, -)$ to the tensor product of $G$ with the Koszul resolution of $j_{+*}j_+^*\Oc$.
\end{proof} 
The following corollary is obtained by applying the same mutations in Section \ref{sec:the_mutations}. 
\begin{corollary}
    The Calabi--Yau fivefolds $Y_+$ and $Y_-$ are derived equivalent.
\end{corollary}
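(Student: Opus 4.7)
The plan is to mimic verbatim the sequence of mutations developed in Section \ref{sec:the_mutations} for the $D_5$ flop, exploiting the fact that the Cayley trick semiorthogonal decomposition \ref{sod_from_cayley_trick} is formally identical in shape to the Orlov blow-up formula \ref{sod_from_orlov}: both express $\dbcoh(M)$ (respectively $\dbcoh(\wt X)$) as a component coming from $\dbcoh(Y_\pm)$ (respectively $\dbcoh(X_\pm)$) together with four copies of $\dbcoh(\SS_\pm)$ twisted successively by $\Oc(0,k)$ (respectively $\Oc(k h_\mp)$) for $k=0,1,2,3$. Plugging the mutated Kuznetsov--Polishchuk collection of Proposition \ref{mutated_KP_cokllection} into both sides therefore yields exactly the same starting chessboard as Figure \ref{fig:starting_point}, except that every appearance of $j_*$ must be replaced by the restriction $j_+^*$ (respectively $j_-^*$).

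First I would check that the mutations of Section \ref{sec:the_mutations} indeed transfer to this setting. All Hom/Ext computations between objects of the form $j_{\pm*}p_{\pm}^*F$ in the flop case rely on the resolution of Lemma \ref{we_can_use_koszul}, which reduces them to cohomology of homogeneous bundles on $E = OG(4,V_{10})$. The Cayley-trick analogue, namely Lemma \ref{we_can_use_koszul_also_for_M}, produces an identical long exact sequence expressing $\Ext_M^m(j_+^*F,j_+^*G)$ in terms of $H^\bullet(E, F^\vee\otimes G)$ and $H^\bullet(E, F^\vee\otimes G(-1,-1))$. Consequently, every orthogonality check and every mutation triangle employed in Steps 1--5 of Section \ref{sec:the_mutations} remains valid on $M$, since the input data are the vanishings of Lemmas \ref{ext_line_bundles_ingredients}, \ref{ext_U_O_ingredients}, \ref{syms_and_wedges_ingredients}, \ref{tautological_shour_powers_from_minus_side} and Corollary \ref{cor:ext_summing_all_up}, none of which depend on whether we are working with pushforwards or restrictions.

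Then I would execute the sequence of mutations: apply Serre functor twists and the block-shuffling in Steps 1--3 to exchange the auxiliary exceptional collection into the self-dual block $\Cc$; perform the internal rearrangement of $\Cc$ in Step 4, using the extension $\shF$ of Lemma \ref{F_as_an_extension}, to pass to objects pulled back from $\spinor_-$; and finally apply Step 5 to rewrite the semiorthogonal complement of the component $j_{+*}\bar\pi_+^* \dbcoh(Y_+)$ as the specular decomposition containing $j_{-*}\bar\pi_-^*\dbcoh(Y_-)$. Composing the chain of mutation functors with the semiorthogonal projection produces an exact equivalence $\Phi:\dbcoh(Y_+)\xrightarrow{\sim}\dbcoh(Y_-)$.

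The only substantive point that deserves a separate check, and the most likely obstacle, is the compatibility between the restriction functor $j_+^*$ and the left/right mutations appearing in Step 4, since the proof of the flop case used the pushforward $j_*$ together with the distinguished triangle of Lemma \ref{we_can_use_koszul}. This is resolved exactly as in the argument of Section \ref{sec:proof_theorem_flops_relative_case}: the analogue of Lemma \ref{lem:mutations} for restriction along the divisor embedding $j_+:M\hookrightarrow OG(4,V_{10})$ shows that, on objects whose underlying mutations on $OG(4,V_{10})$ reduce to the computations handled by Corollary \ref{cor:ext_summing_all_up}, one has the identification $\LL_{j_+^* \Fc}\,j_+^*\Gc \simeq j_+^*\LL_{\Fc}\Gc$ (and similarly for right mutations). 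Once this compatibility is recorded, the derived equivalence $\dbcoh(Y_+)\simeq \dbcoh(Y_-)$ follows immediately.
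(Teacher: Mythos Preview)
Your proposal is correct and follows essentially the same approach as the paper: the Cayley trick decomposition \ref{sod_from_cayley_trick} yields the identical chessboard of Figure \ref{fig:starting_point} with $j_*$ replaced by $j_+^*$, and Lemma \ref{we_can_use_koszul_also_for_M} guarantees that every Ext computation reduces to the very same cohomology groups on $E=OG(4,V_{10})$ used in Section \ref{sec:the_mutations}, so the mutation sequence transfers verbatim. Your explicit discussion of the compatibility $\LL_{j_+^*\Fc}\,j_+^*\Gc \simeq j_+^*\LL_{\Fc}\Gc$ is a bit more detailed than the paper, which simply records Lemma \ref{we_can_use_koszul_also_for_M} and states the corollary without further proof.
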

The proof of Theorem \ref{main_theorem_pairs} is concluded when we recall that, by Theorem \ref{thm:not_isomoprhic_CY5}, $Y_+$ and $Y_-$ are derived equivalent, but not birational.
\section{Derived equivalent pairs of fibrations}
As for the flop of type $D_5$, there is a natural, relative formulation of the construction of pairs of Calabi--Yau fivefolds of type $D_5$. As in Section \ref{sec:relative_flops}, take a principal $Spin(10)$-bundle $\Uc_4$ on $B$, and consider the orthogonal Grassmann bundle $\Pc\times^G G/P_{4,5}$ with its projective bundle structures $p_{\pm}$ over $\Pc\times^G \spinor_{\pm}$. Let us call $H_\pm$ the relative hyperplane classes of these Grassmann bundles. We can choose suitable twists from $B$ such that $\Oc(1, 1)$ is a globally generated line bundle on $\Pc\times^G G/P_{4,5}$, and the restriction map of global sections of $\Oc(1, 1)$ to a fiber is surjective. Then, the zero loci $Z_\pm:= Z(p_{\pm *}s)\subset\Pc\times^G \spinor_{\pm}$ are fibrations over $B$ with general fiber isomorphic to a Calabi--Yau fivefold of type $D_5$ \cite[Lemma 2.14]{mypaper_roofbundles}.
\subsubsection{Proof of Theorem \ref{main_theorem_pairs} -- relative case}
The sequence of mutations described in Section \ref{sec:the_mutations} can be applied to the relative case, and this follows basically from the same argument we presented in Section \ref{sec:proof_theorem_flops_relative_case}. In fact, each of the mutations is of the form $\LL_{j^*V_i} \ j^*V_j \simeq \ j^*V_k$ or $\RR_{\ j^*V_i} \ j^*V_j \simeq \ j^*V_k$. By applying $j^*$ to the triangle \ref{mutation_triangle}, together with Lemma \ref{we_can_use_koszul_also_for_M} and Lemma \ref{lem:mutations}, we see that for all the mutations we consider, the isomorphism $\LL_{\ j^*\mathfrak F(V_i)} \ j^*\mathfrak F(V_j) \simeq \ j^*\LL_{\mathfrak F(V_i)}\mathfrak F(V_j)$ holds, and the right-hand side is isomorphic to $\mathfrak F(\LL_{V_i} V_j)$ by Lemma \ref{lem:mutations}. Hence, we proceed with the sequence of mutations described in Section \ref{sec:the_mutations}.

\bibliographystyle{alpha}
\bibliography{bibliography}

\begin{thebibliography}{JMPE17}

\bibitem[ABT09]{abate_bracci_tovena}
Marco Abate, Filippo Bracci, and Francesca Tovena.
\newblock Embeddings of submanifolds and normal bundles.
\newblock {\em Advances in Mathematics}, 220:620--656, 2009.

\bibitem[BC00]{borisovcaldararu}
Lev Borisov and Andrei Căldăraru.
\newblock {The Pfaffian-Grassmannian Derived Equivalence}.
\newblock {\em Journal of Algebraic Geometry}, 18(2):201–222, 2000.

\bibitem[BCP18]{borisovcaldararuperry}
Lev Borisov, Andrei Căldăraru, and Alexander Perry.
\newblock Intersections of two grassmannians in $\mathbb{P}^9$.
\newblock {\em Journal f{\"u}r die reine und angewandte Mathematik (Crelles Journal)}, 2020:133 -- 162, 2018.

\bibitem[BO01]{bondalorlovreconstruction}
A.~Bondal and D.~Orlov.
\newblock {Reconstruction of a Variety from the Derived Category and Groups of Autoequivalences}.
\newblock {\em Compos. Math.}, 125(3):327--344, 2001.

\bibitem[BO02]{bondalorlov}
A.~Bondal and D.~O. Orlov.
\newblock {Derived Categories of Coherent Sheaves}.
\newblock {\em arXiv: Algebraic Geometry}, 2002.

\bibitem[Huy06]{huybrechts_fm_transform}
Daniel Huybrechts.
\newblock {\em Fourier-Mukai Transforms in Algebraic Geometry}.
\newblock Oxford Mathematical Monographs. Clarendon Press, 2006.

\bibitem[JMPE17]{jardimmenetprataearp}
M.~Jardim, G.~Menet, D.~M. Prata, and H.~N.~Sá Earp.
\newblock Holomorphic bundles for higher dimensional gauge theory.
\newblock {\em Bulletin of the London Mathematical Society}, 49(1):117--132, 2017.

\bibitem[Kan22]{kanemitsu}
Akihiro Kanemitsu.
\newblock Mukai pairs and simple k-equivalence.
\newblock {\em Mathematische Zeitschrift}, 302(4):2037--2057, 2022.

\bibitem[Kaw02]{kawamata}
Yūjirō Kawamata.
\newblock D-equivalence and k-equivalence.
\newblock {\em Journal of Differential Geometry}, 61:147--171, 2002.

\bibitem[KK23]{kapustkas_derived_equivalent_hyperkahlers}
Grzegorz Kapustka and Michał Kapustka.
\newblock Constructions of derived equivalent hyper-kähler fourfolds.
\newblock {\em arXiv preprint arXiv:2312.14543}, 2023.

\bibitem[KP16]{KP_collections_isotropic}
A.~Kuznetsov and A.~Polishchuk.
\newblock Exceptional collections on isotropic grassmannians.
\newblock {\em Journal of the European Mathematical Society}, 018(3):507--574, 2016.

\bibitem[KR19]{ourpaper_cy3s}
Micha\l{} Kapustka and Marco Rampazzo.
\newblock {Torelli problem for Calabi\textendash{}Yau threefolds with GLSM description}.
\newblock {\em Commun. Num. Theor. Phys.}, 13(4):725--761, 2019.

\bibitem[KR22]{ourpaper_k3s}
Micha\l\ Kapustka and Marco Rampazzo.
\newblock Mukai duality via roofs of projective bundles.
\newblock {\em Bulletin of the London Mathematical Society}, 54(2):694--717, 2022.

\bibitem[Kuz18]{kuznetsov_G2}
A.~Kuznetsov.
\newblock {Derived equivalence of Ito–Miura–Okawa–Ueda Calabi–Yau 3-folds}.
\newblock {\em Journal of the Mathematical Society of Japan}, 70(3):1007 -- 1013, 2018.

\bibitem[LX19]{leung_xie_new}
Naichung~Conan Leung and Ying Xie.
\newblock Dk conjecture for derived categories of some grassmannian flips.
\newblock {\em arXiv preprint arXiv:1911.07715}, 2019.

\bibitem[Man19]{manivel}
L.~Manivel.
\newblock {Double spinor Calabi-Yau varieties}.
\newblock {\em {Épijournal de Géométrie Algébrique}}, {Volume 3}, 2019.

\bibitem[MR23]{my_paper_with_riccardo}
Riccardo Moschetti and Marco Rampazzo.
\newblock Fullness of the {K}uznetsov--{P}olishchuk exceptional collection for the spinor tenfold.
\newblock {\em Algebras and Representation Theory}, 26(6):1063--1081, 2023.

\bibitem[OR18]{ottemrennemo}
John~Christian Ottem and J{\o}rgen~Vold Rennemo.
\newblock A counterexample to the birational torelli problem for calabi–yau threefolds.
\newblock {\em Journal of the London Mathematical Society}, 97, 2018.

\bibitem[Orl03]{orlov_derived_torelli}
D.~O. Orlov.
\newblock Derived categories of coherent sheaves and equivalences between them.
\newblock {\em Russian Mathematical Surveys}, 58(3):511, jun 2003.

\bibitem[Ram20]{mypaper_roofbundles}
Marco Rampazzo.
\newblock {Calabi-Yau fibrations, simple K-equivalence and mutations}.
\newblock {\em ArXiv preprint}, 2020.

\bibitem[Ram22]{python_script}
Marco. Rampazzo.
\newblock A python script to compute cohomology of irreducible homogeneous vector bundles on rational homogeneous varieties.
\newblock \url{https://github.com/marcorampazzo/bott-theorem}, 2022.

\bibitem[Ram24]{mypaper_relative_grassmann_flips}
Marco Rampazzo.
\newblock Fano fibrations and dk conjecture for relative grassmann flips.
\newblock {\em arXiv preprint arXiv:2403.10393}, 2024.

\bibitem[Sam07]{samokhin}
Alexander Samokhin.
\newblock Some remarks on the derived categories of coherent sheaves on homogeneous spaces.
\newblock {\em Journal of the London Mathematical Society}, 76(1):122--134, 2007.

\bibitem[Seg16]{segal_C2}
E.~Segal.
\newblock {A new 5-fold flop and derived equivalence}.
\newblock {\em Bulletin of the London Mathematical Society}, 48(3):533--538, 04 2016.

\bibitem[Tho15]{thomas_notes_on_hpd}
Richard Thomas.
\newblock Notes on {HPD}, 2015.
\newblock arXiv:1512.08985.

\bibitem[Xie24]{ying_D4_Gdagger}
Ying Xie.
\newblock Derived equivalences of generalized grassmannian flops: ${D}_4$ and ${G}_2^{\dagger}$ cases, 2024.

\end{thebibliography}

\end{document}